\theoremstyle{plain}
\newtheorem{theorem}{Theorem}
\newtheorem{corollary}{Corollary}
\newtheorem{lemma}{Lemma}
\newtheorem{remark}{Remark}  
\newtheorem*{ack}{Acknowledgment}
\renewcommand{\div}{\operatorname{div}}
\newcommand{\nref}[1]{(\ref{#1})}
\newcommand{\ncite}[1]{{\bf \cite{#1}}}
\theoremstyle{definition}
\newtheorem{definition}{Definition}
\theoremstyle{remark}
 \numberwithin{definition}{section}
 \numberwithin{proposition}{section}
 \numberwithin{remark}{section}
\numberwithin{theorem}{section}
\numberwithin{lemma}{section}
\numberwithin{corollary}{section}
\numberwithin{equation}{section}
\begin{document} 
 
\title{  Rearrangements in Carnot groups} 
\author
{Juan J. Manfredi}
\address{Department of Mathematics \\
University of Pittsburgh\\
Pittsburgh, PA 15260, USA}
\email{manfredi@pitt.edu}
\author{Virginia N. Vera de Serio}
\address{Facultad de Ciencias Econ\'omicas\\
	Universidad Nacional de Cuyo\\
	5500 Mendoza, Argentina}
\email{virginia.vera@fce.uncu.edu.ar}
 
\thanks{First author supported in part by NSF grant
DMS-9970687. \color{blue}Second author by SECTyP-UNCuyo, Argentina, Res. 3853/16-R.} \normalcolor
\subjclass{Primary: 30C65; Secondary: 42B31}
\keywords{Symmetrization, Rearrangements, Carnot Groups}
\date{December 23, 2000}

\begin{abstract}
In this paper we extend the  notion of rearrangement of nonnegative 
functions to the setting of Carnot groups. 
We define  rearrangement 
 with respect to a given family of anisotropic balls $B_{r}$, or
 equivalently with respect to a gauge $\|x\|$, 
and prove basic regularity properties of this construction. If $u$ 
is a bounded nonnegative real function
with compact support, we  denote by $u^{\star}$ its 
rearrangement. Then, the radial function $u^{\star}$ is of  bounded 
 variation. In addition, if $u$ is continuous  then $u^{\star}$
is continuous, and  if $u$ belongs to the 
horizontal Sobolev space $W^{1,p}_{\textrm{h}}$, then
$ \frac{D_{\mathrm{h}}u^{\star}(x)}{|D_{\mathrm{h}}(\|x\|)|}$ is in $L^p $.
Moreover, we found a 
generalization of the inequality of P\'olya and Szeg\"o
$$\int \frac{|D_{\textrm{h}}u^{\star}|^{p}}{
|D_{\mathrm{h}}(\|x\|)|^{p}}\, dx\le
\textrm{C} \,\,
\int |D_{\textrm{h}}u|^{p}\, dx,$$
where $p\ge 1$.

\end{abstract}
\maketitle
\section{Introduction}
Let $u\colon \mathbb{R}^{n}\mapsto\mathbb{R}$ be a non-negative
measurable
real function with compact support. The rearrangement of $u$ is the radial function 
$u^{\star}$ that has
the same distribution function as $u$ with respect to the Lebesgue
measure $\mathcal{L}^{n}$. 
That is, for every $\lambda>0$ we have
$$\mathcal{L}^{n}(\{x\colon u^{\star}(x)>\lambda\})=
\mathcal{L}^{n}(\{x\colon u(x)>\lambda\}).
$$
In particular, for any non-negative Borel measurable
real function $\phi$ we have
$$\int_{\mathbb{R}^{n}}\phi( u^{\star}(x))\,d\mathcal{L}^{n}(x)=
\int_{\mathbb{R}^{n}}\phi(u(x))\,d\mathcal{L}^{n}(x).$$\par
P\'olya and Szeg\"o proved in \ncite{PS} that if 
$u\in W^{1,p}(\mathbb{R}^{n})$, where $p\ge 1$, then so is
$u^{\star}$ and we have the inequality
$$\int_{\mathbb{R}^{n}}|Du^{\star}(x)|^{p}\,d\mathcal{L}^{n}(x)\le
\int_{\mathbb{R}^{n}}|Du(x)|^{p}\,d\mathcal{L}^{n}(x).$$
If the Lebesgue measure $\mathcal{L}^{n}$ is replaced by
another measure  we get a different rearrangement.
The  motivation for this article originated from  a result
of Schulz and Vera de Serio \ncite{SV} concerning rearrangements in
$\mathbb{R}^{2}$ relative to an absolutely continuous measure  
with
respect to $\mathcal{L}^{2}$ with a density $\rho$. The rearrangement
$u^{\star}_{\rho }$ of a non-negative function
$u$ is determined by the condition 
$$\int_{\{u^{\star}_{\rho}>\lambda\}}\rho(x)\, d\mathcal{L}^{2}(x)
=\int_{\{u >\lambda\}}\rho(x)\,d\mathcal{L}^{2}(x)$$
for every $\lambda\ge 0$. One of the main results in  \ncite{SV} states
that if $\log \rho$ is a nonnegative 
sub-harmonic function in $\mathbb{R}^{2}$,
then for every non-negative $u\in W^{1,2}(\mathbb{R}^{2})$
the rearrangement
$u^{\star}_{\rho}\in W^{1,2}(\mathbb{R}^{2})$ and we have
the inequality
$$\int_{\mathbb{R}^{2}}|Du^{\star}_{\rho}(x)|^2
\, d \mathcal{L}^{2}(x)\le
\int_{\mathbb{R}^{2}}|Du(x)|^{2}\,d\mathcal{L}^2(x).$$
\par
In this paper we define rearrangements in general spaces that
include   Carnot-Carath\'eodory spaces, and prove inequalities
of   P\'olya-Szeg\"o type in the case of Carnot groups. \color{blue} For a comprehensive recent symmetrization reference, see the book
\cite{B}. 
\normalcolor
 
\section{ Real Variable Structures for Rearrangements}
We are given a family $\{B_{r}\}$ of non-empty bounded open
sets, \lq\lq balls centered at $0$\rq\rq, in $\mathbb{R}^{n}$ indexed by $r>0$ 
satisfying the following
conditions:
\begin{align}
&	r<s   \implies B_{r}\subset B_{s}, \label{primera}\\
&	\bigcap_{r>0}B_{r}=\{0\},\label{segunda}\\
&	\bigcup_{r>0}B_{r}=\mathbb{R}^{n},\quad\textup{and} \\
&	\bigcup_{0<r<s}B_{r}=B_{s}.\label{union}
\end{align}
We also set $B_{0}=\emptyset$. For $x\in\mathbb{R}^{n}$ we define
the  gauge
$$\|x\|=\inf\{r>0: x \in B_{r}\}$$ and assume that
\begin{equation}
	x\mapsto \|x\|\qquad \textup{is  a continuous function}.
	\end{equation}
It follows easily that 
\begin{equation}\label{ball}
	B_{r}=\{x\colon \|x\|<r\}.
	\end{equation}\par
We are also given a non-negative Borel measure $\mu$ in $\mathbb{R}^{n}$
such that the 
volume function $$ V(r)=\mu(B_{r})$$ satisfies the following
properties:
\begin{align}
	&V(0)=\textstyle{\lim_{r\to 0^{+}}V(r)=0}, \label{vuno}\\
	&V(\infty)=\mu(\mathbb{R}^{n}),\label{vdos}\\
	&V\colon[0,\infty]\mapsto[0,\mu(\mathbb{R}^{n})]\quad
	\textup{is an absolutely continuous bijection}.\label{ultima}
\end{align}
Let $u\colon\mathbb{R}^{n}\mapsto[0, \infty]$ be a non-negative
$\mu$-measurable function \color{blue} with compact support \normalcolor. For each $t\ge 0$ define
\begin{align}
	&E_{u}(t)=\{x\in\mathbb{R}^{n}\mid u(x)>t\},\\
	&  \nu_{u}(t)=\mu(E_{u}(t)),\quad\textup{and} \\
	& \tilde{\nu}_{u}(r)=\sup\{t\colon \nu_{u}(t)>V(r)	\}.
\end{align}
We follow the convention  $\sup\emptyset=0$. 
We are ready for our general definition of rearrangement.
\begin{definition}
	Given a family of non-empty bounded open sets $\{B_{r}\}_{r>0}$ and
	a Borel measure $\mu$ such that properties \nref{primera}
	through \nref{ultima} hold,   the   rearrangement
	of a   $\mu$-measurable function $u:\mathbb{R}^{n}\mapsto[0,\infty]$
	is the \lq\lq radial\rq\rq\ function 
	$u^{\star}:\mathbb{R}^{n}\mapsto[0,\infty]$ defined by
	$$u^{\star}(x)=\tilde{\nu}_{u}(\|x\|).$$
\end{definition}

The following lemma is elementary.
\begin{lemma}
	The function $\tilde{\nu}_{u}$ is finite, non-increasing and
	continuous from the right on $(0,\infty)$. Moreover, we have
	$$\tilde{\nu}_{u}(0)=\tilde{\nu}_{u}(0^{+})=\textup{ess}\sup u.
	$$
	\end{lemma}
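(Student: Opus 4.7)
The plan rests on three preliminary observations: (a) the distribution function $\nu_{u}$ is non-increasing in $t$, and since $u$ has compact support we have $\nu_{u}(t)\to 0$ as $t\to\infty$; (b) by \nref{ultima} the volume function $V$ is continuous and, being a bijection from $[0,\infty]$ onto $[0,\mu(\mathbb{R}^{n})]$, strictly increasing, so $V(r)>0$ for every $r>0$; (c) $\tilde{\nu}_{u}(r)$ is the supremum of a subset of $[0,\infty)$, so the standard techniques for manipulating such suprema apply. All four claims of the lemma reduce to careful bookkeeping with these ingredients.

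Monotonicity of $\tilde{\nu}_{u}$ is essentially tautological: for $r_{1}<r_{2}$, $V(r_{1})\le V(r_{2})$ forces $\{t:\nu_{u}(t)>V(r_{2})\}\subseteq\{t:\nu_{u}(t)>V(r_{1})\}$, hence $\tilde{\nu}_{u}(r_{2})\le\tilde{\nu}_{u}(r_{1})$. Finiteness on $(0,\infty)$ then follows from (a) and (b): for $r>0$ we have $V(r)>0$, and since $\nu_{u}(t)\to 0$ there exists $t_{0}$ with $\nu_{u}(t)\le V(r)$ for all $t\ge t_{0}$; hence $\tilde{\nu}_{u}(r)\le t_{0}<\infty$.

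The substantive step is right-continuity at $r>0$, and this is the one place where the continuity of $V$ is genuinely used; I regard it as the main obstacle. Given $r_{n}\downarrow r$, monotonicity already gives $\lim_{n}\tilde{\nu}_{u}(r_{n})\le\tilde{\nu}_{u}(r)$. For the reverse direction I would fix $t<\tilde{\nu}_{u}(r)$, produce by the definition of supremum some $s>t$ with $\nu_{u}(s)>V(r)$, and then appeal to $V(r_{n})\to V(r)$ to conclude $\nu_{u}(s)>V(r_{n})$ for all sufficiently large $n$. This places $s$ in the defining set for $\tilde{\nu}_{u}(r_{n})$, so $\tilde{\nu}_{u}(r_{n})\ge s>t$; letting $n\to\infty$ and then $t\uparrow\tilde{\nu}_{u}(r)$ gives the required inequality.

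Finally, \nref{vuno} gives $V(0)=0$, so $\tilde{\nu}_{u}(0)=\sup\{t:\nu_{u}(t)>0\}=\sup\{t:\mu(E_{u}(t))>0\}$, which is exactly $\textup{ess}\sup u$ by the definition of essential supremum. The identity $\tilde{\nu}_{u}(0^{+})=\textup{ess}\sup u$ is then obtained by repeating the right-continuity argument at $r=0$: for any $t<\textup{ess}\sup u$ we have $\nu_{u}(t)>0$, so $\nu_{u}(t)>V(r_{n})$ for $n$ large since $V(r_{n})\downarrow V(0)=0$, whence $\tilde{\nu}_{u}(r_{n})\ge t$; taking $t\uparrow\textup{ess}\sup u$ and combining with the trivial bound $\tilde{\nu}_{u}(0^{+})\le\tilde{\nu}_{u}(0)$ coming from monotonicity completes the proof.
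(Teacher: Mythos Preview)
Your proof is correct and complete. The paper itself does not supply a proof of this lemma; it simply states that ``the following lemma is elementary'' and moves on, so there is no argument in the paper to compare yours against. Your write-up fills in exactly the details one would expect.

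One small point of precision: in observation (a) you write that ``since $u$ has compact support we have $\nu_{u}(t)\to 0$ as $t\to\infty$.'' Compact support (together with the finiteness of $\mu$ on bounded sets, which follows from $V(r)<\infty$) guarantees $\nu_{u}(0)<\infty$, so that continuity of $\mu$ from above applies; but the limit of $\nu_{u}(t)$ is $\mu(\{u=\infty\})$, and this is zero only under the additional hypothesis that $u<\infty$ $\mu$-a.e. Without that hypothesis the finiteness claim of the lemma itself can fail (take $u\equiv\infty$ on a set of positive measure), so this is really an implicit assumption in the lemma rather than a gap in your argument---but it would be cleaner to state it.
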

 Observe that the equivalence
 $$\nu_{u}(t)>V(r)\iff \mu(E_{u}(t))>\mu(B_{r})$$
always holds.
\begin{corollary}\label{unoseis}
	If the gauge $x\mapsto\|x\|$ is differentiable $\mu$-a.~e., then
	the gradient	$Du^{\star}(x)$ exists $\mu$-a.~e. 
	and satisfies
	$$Du^{\star}(x)=\tilde{\nu}_{u}'(\|x\|)\cdot D(\|x\|).$$
\end{corollary}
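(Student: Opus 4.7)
The plan is to deduce the corollary directly from the preceding Lemma by combining Lebesgue's theorem on differentiation of monotone functions with the ordinary chain rule, using the absolute continuity of $V$ to transfer exceptional sets between measures on the line and on $\mathbb{R}^n$.

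First, since the Lemma tells us $\tilde{\nu}_{u}$ is finite and non-increasing on $(0,\infty)$, the classical Lebesgue theorem produces a set $N\subset(0,\infty)$ with $\mathcal{L}^{1}(N)=0$ such that $\tilde{\nu}_{u}$ is differentiable at every point of $(0,\infty)\setminus N$. To apply the chain rule for $u^{\star}=\tilde{\nu}_{u}\circ\|\cdot\|$ at $\mu$-almost every point, I need to verify that $\mu(\|\cdot\|^{-1}(N))=0$. This is the one nontrivial step in the argument. Consider the pushforward measure $\mu_{\ast}=\|\cdot\|_{\#}\mu$ on $[0,\infty)$. By \nref{ball}, its distribution function is $\mu_{\ast}([0,r))=\mu(B_{r})=V(r)$, which is absolutely continuous by hypothesis \nref{ultima}. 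Hence $\mu_{\ast}$ is absolutely continuous with respect to $\mathcal{L}^{1}$, so $\mu_{\ast}(N)=0$, equivalently $\mu(\|\cdot\|^{-1}(N))=0$.

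At every $x$ where $\|\cdot\|$ is differentiable (given $\mu$-a.e.\ by the standing hypothesis of the corollary) and where $\tilde{\nu}_{u}$ is differentiable at $\|x\|$ (also $\mu$-a.e.\ by the previous step), the standard chain rule for scalar compositions yields
$$Du^{\star}(x)=\tilde{\nu}_{u}'(\|x\|)\cdot D(\|x\|).$$
Since both conditions hold simultaneously on a set of full $\mu$-measure, this proves the formula. The only genuine obstacle in the argument is the measure-transfer step in the previous paragraph, which is precisely what condition \nref{ultima} on the absolute continuity of $V$ was introduced to guarantee; everything else is routine.
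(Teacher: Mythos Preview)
Your proof is correct and follows exactly the approach the paper takes: the paper's one-line proof singles out precisely your measure-transfer step, observing that absolute continuity of $V$ forces $\mu(\{x:\|x\|\in A\})=0$ for every Lebesgue-null $A\subset\mathbb{R}$, and leaves the invocation of Lebesgue's theorem for monotone functions and the chain rule implicit. You have simply made those implicit steps explicit.
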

\begin{proof}It is enough to observe that the absolute 
	continuity of the
	volume function gives   $\mu(\{x\colon \|x\|\in A\})=0$
	whenever $A$ is a set of measure zero in $\mathbb{R}$.
	\end{proof}
	
\begin{lemma} For every $t\ge 0$ we have 
	 $$\mu(E_{u^{\star}}(t))=\mu(E_{u}(t)).$$
	 Therefore $u$ and $u^{\star}$ have the same distribution
	 function with respect to the measure $\mu$.
\end{lemma}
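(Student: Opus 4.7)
My plan is to identify the super-level set $E_{u^{\star}}(t)$ explicitly as one of the balls $B_{\rho_t}$ and then use the bijectivity of the volume function $V$ to match measures.

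First, because $u^{\star}(x)=\tilde{\nu}_{u}(\|x\|)$ and the preceding lemma gives that $\tilde{\nu}_{u}$ is non-increasing and right-continuous on $(0,\infty)$, the set $\{r\ge 0:\tilde{\nu}_{u}(r)>t\}$ is a half-open interval $[0,\rho_t)$ for some $\rho_t\in[0,\infty]$ (with the convention $\rho_t=0$ when the set is empty). Combined with the identity $B_{r}=\{x:\|x\|<r\}$ from \nref{ball}, this yields
\[E_{u^{\star}}(t)=\{x:\|x\|<\rho_t\}=B_{\rho_t}.\]
The heart of the argument is to identify $\rho_t$ in terms of $\nu_{u}$. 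I would first observe that $\nu_{u}$ is right-continuous: continuity from below of $\mu$ applied to the increasing union $\bigcup_{s>t}E_{u}(s)=E_{u}(t)$ gives $\lim_{s\downarrow t}\nu_{u}(s)=\nu_{u}(t)$. Using this together with the definition $\tilde{\nu}_{u}(r)=\sup\{s:\nu_{u}(s)>V(r)\}$, I would then establish the equivalence
\[\tilde{\nu}_{u}(r)>t\iff \nu_{u}(t)>V(r).\]
The forward direction is immediate from the monotonicity of $\nu_{u}$ (from $\tilde{\nu}_{u}(r)>t$ one extracts some $s>t$ with $\nu_{u}(s)>V(r)$, whence $\nu_{u}(t)\ge\nu_{u}(s)>V(r)$); the backward direction uses right-continuity of $\nu_{u}$ to pass from $\nu_{u}(t)>V(r)$ to the existence of some $s>t$ with $\nu_{u}(s)>V(r)$. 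Because \nref{ultima} forces $V$ to be a strictly increasing continuous bijection, the set $\{x:V(\|x\|)<\nu_{u}(t)\}$ coincides with $B_{\rho_t}$ for $\rho_t=V^{-1}(\nu_{u}(t))$.

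Putting the pieces together produces
\[\mu(E_{u^{\star}}(t))=\mu(B_{\rho_t})=V(\rho_t)=\nu_{u}(t)=\mu(E_{u}(t)),\]
which is exactly the asserted equality of distribution functions. The only delicate step, I expect, is the equivalence $\tilde{\nu}_{u}(r)>t\iff\nu_{u}(t)>V(r)$---in particular the backward direction, where right-continuity of $\nu_{u}$ is essential to convert a strict inequality at $t$ into a strict inequality at some $s>t$; the remainder is just bookkeeping built on \nref{ball}, \nref{ultima}, and the elementary lemma.
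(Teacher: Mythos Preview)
Your argument is correct and coincides with the paper's: both identify $E_{u^{\star}}(t)$ with the ball $B_{V^{-1}(\nu_{u}(t))}$ and finish via $\mu(B_{\rho_t})=V(\rho_t)=\nu_{u}(t)$. The only cosmetic difference is that the paper writes $E_{u^{\star}}(t)=\bigcup_{s>t}B_{V^{-1}(\nu_{u}(s))}$ and collapses the union using property~\nref{union} together with right-continuity of $V^{-1}\circ\nu_{u}$, whereas you encode the same step as the equivalence $\tilde{\nu}_{u}(r)>t\iff\nu_{u}(t)>V(r)$; these are two ways of saying the same thing.
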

	\begin{proof} Let us start observing that
		the level set $E_{u^{\star}}(t)$ is the ball $B_{r}$ where
	$r=V^{-1}(\nu_{u}(t))$. Indeed, we have
	$$E_{u^{\star}}(t)=\{u^{\star}>t\}=
	\bigcup_{s>t}B_{V^{-1}(\nu_{u}(s))}=B_{V^{-1}(\nu_{u}(t))},$$
	because $V^{-1}\circ \nu_{u}$ is right continuous 
	and property \nref{union}. The lemma follows  from
	the following chain of equalities
	$$\mu(E_{u^{\star}}(t))=\mu(B_{r})=V(r)=V(V^{-1}(\nu_{u}(t)))=
	\nu_{u}(t)=\mu(E_{u}(t)).
	$$
\end{proof}
\begin{corollary}\label{menem}
	For any non-negative Borel measurable real function $\phi$ we have
	$$\int_{\mathbb{R}^{n}}\phi( u^{\star}(x))\,d\mu(x)=
\int_{\mathbb{R}^{n}}\phi(u(x))\,d\mu(x).$$
\end{corollary}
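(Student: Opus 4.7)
The plan is to reduce the identity for a general nonnegative Borel $\phi$ to the equality of distribution functions established in the preceding lemma. The cleanest route is through pushforward measures; the main (mild) obstacle is moving from indicators to arbitrary Borel $\phi$, which is a routine measure-theoretic approximation.

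First I would observe that both sides of the desired equality can be written as integrals against the pushforward of $\mu$ by $u$ and by $u^{\star}$ on $[0,\infty)$. Concretely, for any nonnegative Borel $\phi$,
$$\int_{\mathbb{R}^{n}}\phi(u(x))\,d\mu(x)=\int_{[0,\infty)}\phi(s)\,d(u_{*}\mu)(s),$$
and similarly for $u^{\star}$. Thus it suffices to prove that $u_{*}\mu=u^{\star}_{*}\mu$ as Borel measures on $[0,\infty)$.

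Next, these two measures are finite (since $u$ has compact support, $u$ is $\mu$-essentially bounded on a set of finite measure, at least under the standing hypotheses), and the preceding lemma gives
$$u_{*}\mu\bigl((t,\infty)\bigr)=\mu(E_{u}(t))=\mu(E_{u^{\star}}(t))=u^{\star}_{*}\mu\bigl((t,\infty)\bigr)$$
for every $t\ge 0$. The collection of half-lines $(t,\infty)$ is a $\pi$-system generating the Borel $\sigma$-algebra of $[0,\infty)$, so the $\pi$-$\lambda$ theorem yields $u_{*}\mu=u^{\star}_{*}\mu$, and the identity follows.

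If one prefers to avoid explicit pushforward language, the same argument can be executed as a layer-cake computation: first verify the identity for $\phi=\chi_{(t,\infty)}$ directly from the preceding lemma, extend by linearity to nonnegative simple functions of the form $\sum c_{i}\chi_{(t_{i},\infty)}$, and then invoke monotone convergence after approximating a general nonnegative Borel $\phi$ from below by such simple functions. Either way, no new structural hypothesis on $\mu$, on the gauge, or on $u$ beyond what is already in place is required; the content of the corollary is purely a consequence of the equidistribution statement just proved.
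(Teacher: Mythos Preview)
Your argument is correct; the paper itself gives no proof of this corollary, treating it as an immediate consequence of the preceding lemma that $u$ and $u^{\star}$ are equidistributed with respect to $\mu$. Your pushforward/$\pi$--$\lambda$ route (or the equivalent layer-cake version) is exactly the standard way to unpack that implication, and the finiteness of $\mu(\{u>t\})$ for $t>0$ that you invoke is indeed available here since $u$ has compact support and $V(r)=\mu(B_r)<\infty$ for each finite $r$ by hypothesis \eqref{ultima}.
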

\begin{lemma} If $u$ is continuous and has compact support then
	$\nu_{u}$ is strictly decreasing on the interval
	$[0,\textup{ess}\sup u]$ and $V^{-1}\circ \nu_{u}$ is a
	right inverse of $\tilde{\nu}_{u}$.
\end{lemma}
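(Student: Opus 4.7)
The plan is to observe that the two claims are linked: once strict monotonicity of $\nu_u$ on $[0,M]$ is established (where $M:=\mathrm{ess\,sup}_\mu u$), the right-inverse property follows from a short computation with the definition of $\tilde{\nu}_u$. Indeed, for $t\in[0,M]$, setting $r=V^{-1}(\nu_u(t))$ gives $V(r)=\nu_u(t)$, so
\[
\tilde{\nu}_u(r)=\sup\{s:\nu_u(s)>\nu_u(t)\}.
\]
Strict monotonicity on $[0,M]$ together with the fact that $\nu_u(s)=0$ for $s\ge M$ (by definition of essential supremum) forces $\{s:\nu_u(s)>\nu_u(t)\}$ to equal $[0,t)$ (or $\emptyset$ if $t=0$), whose supremum is $t$, matching $\tilde{\nu}_u(0)=M$ from the previous lemma when $t=M$. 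So the substantive work is the strict monotonicity.

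For that, fix $0\le t_1<t_2\le M$. The goal is to show $\nu_u(t_1)>\nu_u(t_2)$, i.e., $\mu(\{t_1<u\le t_2\})>0$. Because $u$ is continuous with compact support, $u$ attains its maximum $M':=\max u\ge M$ at some point, and the range of $u$ is the connected interval $[0,M']$. Hence by the intermediate value theorem, applied along a continuous path joining a maximizing point to a point outside $\mathrm{supp}\,u$, there exists $y\in\mathbb{R}^n$ with $u(y)\in(t_1,t_2)$. By continuity, the open set
\[
G:=u^{-1}\bigl((t_1,t_2)\bigr)
\]
is a non-empty open neighborhood of $y$, contained in $\{t_1<u\le t_2\}$.

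The main obstacle is to conclude $\mu(G)>0$. This is where the structural non-degeneracy of $\mu$ must be used: the bare conditions \eqref{primera}--\eqref{ultima} only guarantee $\mu(B_r)>0$ for all $r>0$, since $V$ is a strictly increasing continuous bijection. The intended argument is that in the settings of interest (Carnot groups with Haar measure), every non-empty open set contains a translate of some small $B_r$, and thus has positive $\mu$-measure; we invoke this to conclude $\mu(G)>0$, which gives $\nu_u(t_1)\ge\mu(\{u>t_1\}\cap G)+\mu(\{u>t_2\})>\nu_u(t_2)$.

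Combining, $\nu_u$ is strictly decreasing on $[0,M]$, and the right-inverse identity $\tilde{\nu}_u\circ V^{-1}\circ\nu_u=\mathrm{id}_{[0,M]}$ follows from the displayed computation above.
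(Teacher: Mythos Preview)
Your argument is correct and follows the standard route: use the intermediate value theorem along a path from a maximum point to a point outside the support to produce a point in $u^{-1}((t_1,t_2))$, invoke positivity of $\mu$ on non-empty open sets to get strict monotonicity, and then read off the right-inverse identity from the definition of $\tilde\nu_u$ exactly as you compute. The paper does not supply its own argument here but simply cites the corresponding lemmas in \ncite{SV}, so a line-by-line comparison is not possible; the argument in \ncite{SV} is, however, essentially the one you wrote.

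Your flag about $\mu(G)>0$ is well taken and worth keeping. The bare axioms \nref{primera}--\nref{ultima} guarantee only that each shell $B_s\setminus B_r$ has positive measure (via strict monotonicity of $V$), not that every non-empty open set does. In \ncite{SV} the measure is Lebesgue times a strictly positive density, and in the present paper's intended application $\mu=\mathcal{L}^n$ on a Carnot group, so the issue is moot there; the authors' remark that ``only properties \nref{primera} through \nref{ultima} are used'' should be read with this tacit standing assumption. One small simplification you can make in the Carnot setting: since $\mu=\mathcal{L}^n$ charges every open set and $u$ is continuous, $\max u=\mathrm{ess\,sup}\,u$, so your auxiliary $M'$ coincides with $M$ and the IVT step becomes slightly cleaner.
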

	\begin{proof} The proof is identical to the proof 
		of Lemma 1.4.1 and Lemma 1.5.1 in \ncite{SV}, since
		only properties \nref{primera} through \nref{ultima}
		are used.
		\end{proof}
\begin{theorem}\label{ucontinua}
	If $u$ is continuous with compact support so
	is $u^{\star}$.
\end{theorem}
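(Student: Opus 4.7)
The plan is to establish compact support and continuity of $u^\star$ separately. For compact support, I would use that compactness of $\mathrm{supp}\,u$ forces $\nu_u(0)=\mu(\{u>0\})$ to be finite, because $\mathrm{supp}\,u$ sits inside some ball $B_R$ and $V(R)=\mu(B_R)<\infty$ by property \nref{ultima}. Setting $r^{*}=V^{-1}(\nu_u(0))$, for every $r>r^{*}$ we have $V(r)>\nu_u(0)\geq\nu_u(t)$ for all $t\geq 0$, so $\{t:\nu_u(t)>V(r)\}=\emptyset$ and $\tilde\nu_u(r)=0$. Hence $u^{\star}$ vanishes outside the bounded set $B_{r^{*}}$, whose closure is compact in $\mathbb{R}^{n}$.

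For continuity, since the gauge is continuous by hypothesis, it suffices to prove that $\tilde\nu_u$ is continuous on $[0,\infty)$. The first lemma of this section already supplies right‑continuity and monotonicity, so the real task is left‑continuity at a fixed $r_0>0$. My plan is to first establish the identity
$$\tilde\nu_u(r_0^{-})=\sup\{t:\nu_u(t)\geq V(r_0)\},$$
while by definition
$$\tilde\nu_u(r_0)=\sup\{t:\nu_u(t)>V(r_0)\}.$$
The first identity is where I expect to spend the most care: one passes from the strict inequality $\nu_u(t)>V(r)$ valid for every $r<r_0$ to the non‑strict inequality $\nu_u(t)\geq V(r_0)$ by invoking continuity of $V$ at $r_0$ (which is part of \nref{ultima}), together with the fact that a non‑increasing function has a left limit equal to the supremum of its values to the left.

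Once this is in place, the two displayed suprema differ only if there is some $t_0$ with $\nu_u(t_0)=V(r_0)$ lying strictly above the second supremum. Here I invoke the previous lemma: since $u$ is continuous with compact support, $\nu_u$ is strictly decreasing on $[0,\mathrm{ess}\sup u]$, so it attains the value $V(r_0)$ at most once, and for the unique such $t_0$ strict decrease gives $\nu_u(t)>V(r_0)$ for every $t<t_0$ and $\nu_u(t)<V(r_0)$ for every $t>t_0$; both suprema then equal $t_0$. When no such $t_0$ exists (for instance if $V(r_0)$ falls in a gap of the range of $\nu_u$ or exceeds $\nu_u(0)$), the two sets agree outright and the suprema trivially match.

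The main obstacle, as noted, is being careful with the one‑sided limit identity and the sup‑versus‑supremum subtleties; the rest of the argument is then just the observation that $u^{\star}=\tilde\nu_u\circ\|\cdot\|$ is the composition of two continuous functions, and the compact‑support bound $\mathrm{supp}\,u^{\star}\subset\overline{B_{r^{*}}}$ completes the theorem.
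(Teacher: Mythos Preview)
Your argument is correct. The compact-support step is fine, and your left-continuity argument for $\tilde\nu_u$ goes through: the identity $\tilde\nu_u(r_0^{-})=\sup\{t:\nu_u(t)\ge V(r_0)\}$ follows from continuity and strict monotonicity of $V$ (property \nref{ultima}), and strict decrease of $\nu_u$ on $[0,\mathrm{ess}\sup u]$ (the lemma immediately preceding the theorem) then collapses this supremum to $\sup\{t:\nu_u(t)>V(r_0)\}=\tilde\nu_u(r_0)$.

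The paper itself gives no argument here: it simply declares that the proof is identical to that of Theorem~1.4.4 in \ncite{SV}, since only properties \nref{primera}--\nref{ultima} are used. So there is nothing substantive to compare against. One structural remark: the paper derives continuity of $\tilde\nu_u$ \emph{from} continuity of $u^{\star}$ in the subsequent Lemma~\ref{vhypo}(i), whereas you reverse this order and obtain continuity of $u^{\star}$ as a consequence of continuity of $\tilde\nu_u$. Since $u^{\star}=\tilde\nu_u\circ\|\cdot\|$ with $\|\cdot\|$ continuous and surjective onto $[0,\infty)$, the two statements are equivalent, so your route is logically interchangeable with the paper's ordering and has the advantage of being self-contained.
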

\begin{proof} Once again, the proof is identical to the proof 
		of Theorem 1.4.4 in \ncite{SV}, since
		only properties \nref{primera} through \nref{ultima}
		are used.
		\end{proof}
\begin{lemma}\label{vhypo}
	If $u$ is continuous with compact support then
	\begin{itemize}
		\item[(i)] $\tilde{\nu}_{u}$ is continuous and,
		\item[(ii)] if  $\nu_{u}'(r)\not= 0$ for a.~e. 
		$r\in [0,\textup{ess}\sup u]$ 
		then $\tilde{\nu}_{u}$ is absolutely continuous.
		\end{itemize}
\end{lemma}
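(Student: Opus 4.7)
For part (i), my plan is to show that any discontinuity of $\tilde{\nu}_u$ would contradict the strict monotonicity of $\nu_u$ on $[0,M]$, $M=\operatorname{ess\,sup}u$, guaranteed by the preceding lemma. The earlier lemma already gives $\tilde{\nu}_u$ nonincreasing and right-continuous, so a discontinuity could only be a left jump at some $r_0$. For any $t$ strictly between $\tilde{\nu}_u(r_0)$ and $\tilde{\nu}_u(r_0^-)$, the defining supremum relation combined with continuity of $V$ forces $\nu_u(t)=V(r_0)$; since this holds for every $t$ in an open interval of positive length, $\nu_u$ would be constant there, contradicting strict decrease.

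For part (ii), my plan is to factor $\tilde{\nu}_u=\nu_u^{\leftarrow}\circ V$ on $[0,R]$, $R=V^{-1}(\nu_u(0))$, where $\nu_u^{\leftarrow}$ denotes the right-continuous generalized inverse of $\nu_u$, and to establish absolute continuity in two stages. First I would show $\nu_u^{\leftarrow}$ is AC by the Banach--Zaretsky theorem: it is continuous (by the same argument as in (i), since $\nu_u$ is strictly decreasing) and monotone, hence BV, so absolute continuity reduces to Luzin's $(N)$ condition. Condition $(N)$ for $\nu_u^{\leftarrow}$ reduces, up to countably many jump points that are automatically null, to showing $\{t:\nu_u(t)\in E\}$ is null in $[0,M]$ whenever $E$ is null, which follows from the inequality
$$|\nu_u(A)|\ge\int_A|\nu_u'(t)|\,dt,$$
valid for every Borel $A\subset[0,M]$ and every monotone BV function $\nu_u$ (proved via the Lebesgue--Stieltjes decomposition of $|d\nu_u|$ into absolutely continuous and nonnegative singular parts, combined with inner regularity). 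The hypothesis $\nu_u'\ne 0$ a.e.\ then forces $\int_A|\nu_u'|>0$ whenever $|A|>0$, giving the required contradiction when $A=\{t:\nu_u(t)\in E\}$. For the second stage I invoke the standard composition lemma for monotone absolutely continuous inner functions: a short $\epsilon$-$\delta$ argument, exploiting that the monotonicity of $V$ sends disjoint intervals to essentially disjoint intervals whose total length is controlled by the absolute continuity of $V$ (hypothesis~\eqref{ultima}), combines with AC of $\nu_u^{\leftarrow}$ to give AC of $\tilde{\nu}_u$ on $[0,R]$. Beyond $R$, $\tilde{\nu}_u$ vanishes identically and matches continuously at $R$, so the conclusion extends to every compact interval.

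The main obstacle I foresee is establishing the inequality $|\nu_u(A)|\ge\int_A|\nu_u'|\,dt$ rigorously in the monotone, not-a-priori-AC setting, together with the careful handling of the countably many flat-spot/gap situations implicit in the generalized inverse. The remaining steps are routine applications of Banach--Zaretsky and the composition lemma for monotone AC functions.
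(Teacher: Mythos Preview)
Your proposal is correct. The paper's own proof is by citation: for (i) it deduces continuity of $\tilde\nu_u$ from the already-proved continuity of $u^\star$ (Theorem~\ref{ucontinua}), and for (ii) it simply invokes Proposition~1.5.2 of \cite{SV}, noting that the absolute continuity of $V$ replaces the corresponding hypothesis there. Your argument takes a different, self-contained route. For (i) you argue directly from the strict decrease of $\nu_u$ and the continuity of $V$, which avoids passing through $u^\star$ and hence does not rely on Theorem~\ref{ucontinua}. For (ii) you factor $\tilde\nu_u=\nu_u^{\leftarrow}\circ V$ and prove $\nu_u^{\leftarrow}$ is absolutely continuous via Banach--Zaretsky, using the classical inequality $m^*(\nu_u(A))\ge\int_A|\nu_u'|$ for monotone functions together with the hypothesis $\nu_u'\ne 0$ a.e.\ to verify Luzin's condition~(N); then the standard $\epsilon$--$\delta$ composition argument with the monotone absolutely continuous inner function $V$ finishes. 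Both steps are sound, and the ``main obstacle'' you flag---the image-measure inequality for monotone functions---is a known result (see, e.g., Saks or Natanson), so there is no genuine gap. The advantage of your approach is that it is fully self-contained within the present paper's framework, whereas the paper's proof requires the reader to consult \cite{SV}; the paper's approach is of course shorter on the page.
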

\begin{proof}
	The continuity of $\tilde{\nu}_{u}$ follows from the
	continuity of $u^{\star}$. The argument for (ii) is
	the same as in the proof of Proposition 1.5.2 in
	\ncite{SV} using the absolute continuity of $V$.
\end{proof}
\section{  Rearrangements in Carnot groups.}
 Consider a collection
of $m$ smooth vector fields in $\mathbb{R}^{n}$
$$\{X_{1},X_{2},\ldots,X_{m}\}$$ satisfying H\"ormander's condition
$$\textup{Rank Lie}[X_{1},X_{2},\ldots,X_{m}](x)=n$$
at every $x\in\mathbb{R}^{n}$. 
We will also assume that the horizontal tangent space
$$T_{\mathrm{h}}(x)=\textup{Linear span}[X_{1},X_{2},\ldots,X_{m}](x)$$
has dimension $m\le n$
for all $x\in\mathbb{R}^{n}$.\par
A piecewise smooth 
curve $t\mapsto\gamma(t)\in\mathbb{R}^{n}$ is horizontal
if its tangent vector
$\gamma'(t)$ is in   $T_{\mathrm{h}}(\gamma(t))$.
The Carnot-Carath\'{e}odory distance between 
 the points $p$ and $q$ is defined  as follows: 
$$
d_{CC}(p,q)=\inf \{ \textup{length}(\gamma)\mid \gamma\in \Gamma \}
$$
where the set $ \Gamma $
is the set of all horizontal curves $ \gamma $ such that 
$ \gamma (0) = p$ and  $ \gamma (1) = q $. To measure the length
of a curve we use the metric in $T_{\mathrm{h}}(x)$
determined by requiring that  the vector fields
$\{X_{1},X_{2},\ldots,X_{m}\}$ form an orthonormal basis. We
can always extend this metric to a full Riemannian metric
in $\mathbb{R}^{n}$ so that  
its volume element is the Lebesgue measure $\mathcal{L}^{n}$.\par

By Chow's theorem (see, for example,
\ncite{BR}) any two points
 can be connected by a horizontal curve, which
makes $d_{CC}$  a metric on $ \mathbb{R}^{n}$.
A Carnot-Carath\'{e}odory ball of radius $r$ centered at a
point $p_0$ is given by
$$B(p_0,r)=\{p\in  \mathbb{R}^{n}: d_{CC}(p,p_0) < r\}.$$

Observe that properties \nref{primera} to \nref{ball} always
hold in an arbitrary metric space if $B_{r}=B(x_{0},r)$ is
the family of balls centered at some fixed point $x_{0}$, where
$0$ in property \nref{segunda} is replaced by $x_{0}$.\par

Given a Borel measure $\mu$ property \nref{vdos} always holds
and so does \nref{vuno} if $\mu$ is non-atomic. Property 
\nref{ultima} follows easily if $\mu$ is absolutely continuous
with respect to $\mathcal{L}^{n}$.  \par

>From now on we will consider the case of
a Carnot group $\mathcal{G}$ of dimension $n$ and
homogenous dimension $Q$ as defined, for example, in \ncite{FS}.
The vector fields 
$\{X_{1},X_{2},\ldots,X_{m}\}$ are  left-invariant so that
we think of them as elements in the Lie algebra $\mathfrak{g}$.
The Haar measure of the group is $\mathcal{L}^{n}$ and we
have a family of group homomorphisms $\delta_{r}$ indexed
by $r>0$, called
dilations, satisfying
$$\delta_{r}\circ \delta_{s}=\delta_{rs}.$$
The volume of a ball is given by
$$V(B(p_{0},R)=\textup{constant}\cdot R^{Q}.$$
In order to determine a real variable structure for
rearrangements we have to single out a gauge $x\to\|x\|$
and set $B_{R}=\{x\mid \|x\|< R\}$. There are many
choices of   gauges which are smooth away from the
origin, see \ncite{FS}. A gauge that is usually
non-smooth but natural in our setting is  the Carnot
gauge
$$\|x\|_{C}=d_{CC}(x,0).$$
 
We occasionally identify $\mathcal{G}$ with the 
underlying space $\mathbb{R}^{n}$.

\begin{theorem}\label{ccsimetrico}
	A Carnot group $\mathcal{G}$  endowed with the  
	Carnot gauge $\|x\|_{C}$,
	or with a smooth gauge $x\mapsto\|x\|$ 
	together with the Lebesgue
	measure $\mathcal{L}^{n}$ forms a real variable 
	rearrangement structure. That is, properties
	\nref{primera} through \nref{ultima} hold.
\end{theorem}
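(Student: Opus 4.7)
My plan is to verify properties \nref{primera}--\nref{ultima} for each of the two candidate structures: the Carnot gauge together with $\mathcal{L}^{n}$, and a smooth gauge together with $\mathcal{L}^{n}$.

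First, I would dispose of properties \nref{primera}--\nref{ball} concerning the ball family. As the paragraph preceding the theorem points out, these hold automatically in any metric space when $B_r = B(0,r)$. For the Carnot gauge the only nontrivial point is continuity of $x \mapsto d_{CC}(x,0)$, which is a consequence of the triangle inequality for $d_{CC}$ combined with the local comparison of $d_{CC}$ with the Euclidean metric that follows from Chow's theorem. For a smooth gauge, continuity holds by hypothesis.

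The heart of the proof is the verification of \nref{vuno}, \nref{vdos} and \nref{ultima} for the volume function $V(R) = \mathcal{L}^{n}(B_R)$. Property \nref{vdos} holds trivially because $\mathcal{L}^{n}(\mathbb{R}^{n})=\infty$. For the remaining two properties I would exploit the scaling identity $B_R = \delta_R(B_1)$ together with the Jacobian computation $\mathcal{L}^{n}(\delta_R A) = R^{Q}\,\mathcal{L}^{n}(A)$. For a smooth homogeneous gauge, $B_R = \delta_R(B_1)$ is part of the definition. For the Carnot gauge it reduces to verifying
$$d_{CC}(\delta_R x, 0) = R\cdot d_{CC}(x, 0):$$
if $\gamma$ is a horizontal curve from $0$ to $x$, then $\delta_R\circ \gamma$ is again horizontal, since each left-invariant horizontal vector field $X_i$ satisfies $d\delta_R(X_i) = R\, X_i$, and it has length $R\cdot \textup{length}(\gamma)$. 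Taking infima in both directions, the infimum defining $d_{CC}(\delta_R x, 0)$ is exactly $R$ times the one defining $d_{CC}(x, 0)$. Therefore
$$V(R) = \mathcal{L}^{n}(\delta_R(B_1)) = R^{Q}\,\mathcal{L}^{n}(B_1) = c\,R^{Q},$$
which is an absolutely continuous, strictly increasing bijection from $[0,\infty]$ onto $[0,\infty]$, so \nref{vuno} and \nref{ultima} follow.

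The main technical step is the Carnot scaling identity $d_{CC}(\delta_R x, 0) = R \cdot d_{CC}(x, 0)$; every other step is either definitional or a routine consequence once this scaling is in hand.
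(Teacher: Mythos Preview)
Your proposal is correct and follows essentially the same route as the paper. The paper does not give a formal proof of this theorem; instead, the justification is contained in the paragraphs immediately preceding the statement (properties \nref{primera}--\nref{ball} hold in any metric space, \nref{vuno}--\nref{ultima} follow from non-atomicity and absolute continuity of $\mathcal{L}^{n}$ with respect to itself, and the volume formula $V(R)=\textup{constant}\cdot R^{Q}$ is simply asserted). Your write-up fills in exactly these points, adding the explicit derivation of the scaling identity $d_{CC}(\delta_{R}x,0)=R\,d_{CC}(x,0)$ that underlies the volume formula in the Carnot-gauge case.
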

In particular Theorem  \nref{ucontinua} applies to
a Carnot group endowed with an arbitrary gauge.
\par

The horizontal gradient of a function $u\colon\mathcal{G}\mapsto
\mathbb{R}$ is the projection of the full gradient onto the
horizontal tangent space
$$D_{h}u=(X_{1}u)X_{1}+(X_{2}u)X_{2}+\ldots+(X_{m}u)X_{m}.$$
For $p\ge 1$ the horizontal Sobolev space  is defined by
$$W_{\textrm{h}}^{1,p}(
\mathcal{G})=\left\{
u\in L^{p}(\mathcal{G})\mid D_{\textrm{h}}u\in L^{p}(\mathcal{G})
\right\}.$$
Endowed with the norm
$$\|u\|_{W_{\textrm{h}}^{1,p}(\mathcal{G})}=
\|u\|_{L^{p}(\mathcal{G})}+
\|D_{\textrm{h}}u\|_{L^{p}(\mathcal{G})},$$
the class $W_{\textrm{h}}^{1,p}(\mathcal{G})$ is a Banach space (see
\ncite{GN} and \ncite{L}).\par

The horizontal divergence $\div_{\textrm{h}}(F)$
of a horizontal vector field $F$ 
($F(x)=\sum_{i=1}^{m}F^{i}(x)X_{i}(x)$)
is defined by requiring
that for every  compactly supported   smooth function $\phi$ the
equality 
$$\int_{\mathcal{G}} \phi \div_{\textrm{h}}(F) d\mathcal{L}^{n}
=-\int_{\mathcal{G}}\langle D_{\textrm{h}}\phi, F \rangle
d\mathcal{L}^{n}
$$
holds.  \par
Next, we recall the definition of horizontal bounded variation
from \ncite{GN}. We say that $u\in BV_{\mathrm{h}}(\Omega)$
if
$$\|u\|_{BV_{\mathrm{h}}(\Omega)}=
\sup\left\{ \int_{\Omega}u \div_{\mathrm{h}}F\,
d\mathcal{L}^n
\right\}<\infty.$$
where the supremum is taken among all 
$F\in C_{0}^{\infty}(\Omega,
\mathfrak{g})$ such that $\sum_{i=1}^{m}|F^{i}(x)|^{2}\le 1$. 
If the function $u$ is smooth,
the horizontal bounded variation is just the $L^{1}$-norm of
the length of the horizontal gradient
$$\|u\|_{BV_{\mathrm{h}}(\Omega)}=
\int_{\Omega}|D_{\mathrm{h}}u|\,d\mathcal{L}^{n}.$$\par
A measurable set $E\subset \mathcal{G}$ has finite horizontal 
perimeter relative to a domain $\Omega\subset\mathcal{G}$ 
if $\chi_{E}\in BV_{\mathrm{h}}(\Omega)$ in which case
we write
$$\mathcal{P}_{\mathrm{h}}(E, \Omega)=\|\chi_{E}\|_{ BV_{\mathrm{h}}
(\Omega)}.$$
We shall denote $\mathcal{P}_{\mathrm{h}}(E, \mathcal{G})$ simply
by $\mathcal{P}_{\mathrm{h}}(E)$.\par
Using the anisotropic dilations, it is easy to see that
\begin{equation}\label{perihomo}
	\mathcal{P}_{\mathrm{h}}(B_{R})= R^{Q-1}
	\mathcal{P}_{\mathrm{h}}(B_{1}).
\end{equation}

\begin{theorem} Suppose that $\mathcal{G}$ is a Carnot group
endowed with
a gauge so that the unit ball $B_{1}$ is regular enough to have
finite horizontal perimeter.
Let $u\in L^{\infty}(\mathcal{G})$ be a nonnegative
function with compact support.
Then $u^{\star}\in BV_{\mathrm{h}}(\mathcal{G})$.  
\end{theorem}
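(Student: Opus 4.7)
The plan is to exploit the fact that the level sets of $u^{\star}$ are balls $B_{r}$, combined with the scaling identity \nref{perihomo} for horizontal perimeter and the bounds forced by the hypotheses $u\in L^{\infty}$ and $u$ of compact support. From the proof of the earlier lemma identifying level sets of $u^{\star}$, we know that $\{u^{\star}>t\}=B_{r(t)}$ where $r(t)=V^{-1}(\nu_{u}(t))$. Setting $M=\|u\|_{L^{\infty}(\mathcal{G})}$ and $R_{0}=V^{-1}(\mu(\{u>0\}))$, the boundedness of $u$ forces $r(t)=0$ for $t\ge M$, while the compact support of $u$ ensures $R_{0}<\infty$ and hence $r(t)\le R_{0}$ for every $t\ge 0$.

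The core estimate is a Fubini interchange. Fix any admissible test field $F\in C_{0}^{\infty}(\mathcal{G},\mathfrak{g})$ with $\sum_{i=1}^{m}|F^{i}(x)|^{2}\le 1$. Using the layer-cake representation
$$u^{\star}(x)=\int_{0}^{M}\chi_{B_{r(t)}}(x)\,dt$$
and exchanging the order of integration yields
$$\int_{\mathcal{G}}u^{\star}\,\div_{\mathrm{h}}F\,d\mathcal{L}^{n}=\int_{0}^{M}\!\!\int_{\mathcal{G}}\chi_{B_{r(t)}}\,\div_{\mathrm{h}}F\,d\mathcal{L}^{n}\,dt\le \int_{0}^{M}\mathcal{P}_{\mathrm{h}}(B_{r(t)})\,dt,$$
where the inner inequality is simply the definition of the horizontal perimeter of $B_{r(t)}$ applied to the admissible field $F$.

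Combining this with the scaling identity $\mathcal{P}_{\mathrm{h}}(B_{r(t)})=r(t)^{Q-1}\mathcal{P}_{\mathrm{h}}(B_{1})$ from \nref{perihomo} and the uniform bound $r(t)\le R_{0}$ gives
$$\int_{\mathcal{G}}u^{\star}\,\div_{\mathrm{h}}F\,d\mathcal{L}^{n}\le M\,R_{0}^{Q-1}\,\mathcal{P}_{\mathrm{h}}(B_{1}).$$
Since the right-hand side is independent of $F$, taking the supremum over all admissible $F$ yields $\|u^{\star}\|_{BV_{\mathrm{h}}(\mathcal{G})}\le M\,R_{0}^{Q-1}\,\mathcal{P}_{\mathrm{h}}(B_{1})<\infty$ by the hypothesis on $B_{1}$.

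The only nontrivial point is the Fubini step, and it is routine: $\div_{\mathrm{h}}F$ is bounded with compact support, and $t\mapsto r(t)$ is non-increasing (hence measurable), so the joint integrand belongs to $L^{1}(\mathcal{G}\times[0,M])$. The compact support hypothesis on $u$ enters precisely to bound $r(t)$ uniformly on $[0,M]$; without it, finiteness of $\|u^{\star}\|_{BV_{\mathrm{h}}}$ would instead require the integrability of $t\mapsto r(t)^{Q-1}$ on $[0,M]$, which is the genuine obstacle to dropping the compact support assumption.
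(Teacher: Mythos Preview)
Your argument is correct. It differs from the paper's in organization rather than substance: the paper writes $\int_{\mathcal{G}} u^{\star}\div_{\mathrm{h}}F\,d\mathcal{L}^{n}$ in polar coordinates (via Proposition~1.15 of \cite{FS}), obtaining $\int_{0}^{\infty}\tilde{\nu}_{u}(r)\,\phi'(r)\,dr$ with $\phi(r)=\int_{B_{r}}\div_{\mathrm{h}}F$, then integrates by parts in $r$ to reach $-\int_{0}^{\infty}\phi(r)\,d\tilde{\nu}_{u}(r)$ and bounds $\phi(r)\le \mathcal{P}_{\mathrm{h}}(B_{1})\,r^{Q-1}$. You instead slice in the value variable $t$ via the layer-cake identity $u^{\star}=\int_{0}^{M}\chi_{B_{r(t)}}\,dt$ and apply Fubini. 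The two computations are dual to each other: the change of variables $t=\tilde{\nu}_{u}(r)$ takes $\int_{0}^{M}r(t)^{Q-1}\,dt$ to $-\int_{0}^{\infty}r^{Q-1}\,d\tilde{\nu}_{u}(r)$, and both are bounded by $M\,R_{0}^{Q-1}$ using $L^{\infty}$ and compact support. Your route has the mild advantage of not invoking the polar-coordinates proposition from \cite{FS}; the paper's route makes the radial structure more explicit and gives the sharper intermediate bound $\mathcal{P}_{\mathrm{h}}(B_{1})\int_{0}^{M}r(t)^{Q-1}\,dt$ before passing to the crude estimate.
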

\begin{remark}The finiteness of the horizontal perimeter
	of a ball  
certainly holds for   smooth gauges   and also 
for the Carnot gauge 
$\|x\|_{C}$ in a general Carnot group. See Remark 4.3 in
\ncite{MSC}.
	\end{remark}
\begin{proof} We will use integration in polar coordinates.
	For $r>0$ set
	$$\phi(r)=\int_{B_{r}} \div_{\mathrm{h}} F(y) \, d\mathcal{L}^{n}(y).$$ It follows
	from  Proposition 1.15 in \ncite{FS}, that there exists a 
	Radon measure $\sigma$ on $\partial B_{1}$ such that
	$$\phi'(r)=\int_{\partial B_{r}}\div_{\mathrm{h}} F(y) \,
	d\sigma_{r}(y),$$
	where $d\sigma_{r}$ is the image of the measure $d\sigma$ under
	the dilation $x\mapsto \delta_{r}(x)$.\par
	 
		Let $F\in C_{0}^{\infty}(\mathcal{G},
\mathfrak{g})$ be a test field satisfying $|F|\le 1$. We have
\begin{align*}
\int_{\mathcal{G}}u^{\star}(x) \div_{\mathrm{h}}F(x)\,
d\mathcal{L}^{n}(x)&=
\int_{0}^{\infty}\int_{\partial B_{r}}
u^{\star}(y)\div_{\mathrm{h}}F(y)\, d\sigma_{r}(y)\, dr\\
&=\int_{0}^{\infty}\tilde\nu_{u}(r)\left(
\int_{\partial B_{r}}\div_{\mathrm{h}}F(y)\, d\sigma_{r}(y)
\right)\, dr\\
&=\int_{0}^{\infty}\tilde\nu_{u}(r)\phi'(r), dr\\
&=-\int_{0}^{\infty}\phi(r) d\tilde\nu_{u}(r)
 \end{align*}
Observe next that $\phi(R)\le \mathcal{P}_{\mathrm{h}}(B_{R})$.
Using \nref{perihomo} we get
$$\phi(R)\le \mathcal{P}_{\mathrm{h}}(B_{1})\cdot R^{Q-1}.$$ Since
$-d\tilde\nu_{u}$ is a positive measure we see that
$$\int_{\mathcal{G}}u^{\star}(x) \div_{\mathrm{h}}F(x)\, d\mathcal{L}^{n}x
\le  -\mathcal{P}_{\mathrm{h}}(B_{1})\int_{0}^{\infty} r^{Q-1}
\, d\tilde\nu_{u}(r)<\infty.$$
 Therefore $u^{\star}\in BV_{\mathrm{h}}(\mathcal{G})$. 
 
\end{proof}

A basic result that we shall use several times is the  
isoperimetric inequality for horizontal perimeters. (See
Garofalo and Nhieu \ncite{GN}  and    Franchi, Gallot and
Wheeden \ncite{FGW}).
Recall that $Q$ is the homogeneous
dimension of our Carnot group. For every set $E$ with finite
horizontal perimeter $\mathcal{P}_{\mathrm{h}}(E)< \infty$ we have
\begin{equation}\label{iso}
	\left(\mathcal{L}^{n}(E)\right)^{\frac{Q-1}{Q}}\le 
	C_{\textrm{iso}}\,\, \mathcal{P}_{\mathrm{h}}(E),
	\end{equation}
	where $C_{\textrm{iso}}$ is a constant independent of the set $E$.
Inequality \eqref{iso} follows from Theorem 1.18 in \ncite{GN} by
taking the domain $\Omega$ in this theorem to be a metric ball
of radius $R$ and letting  $R\to\infty$. \par
Garofalo and Nhieu \ncite{GN} and Franchi, Gallot
and Wheeden \ncite{FGW} also extended Federer's classical
co-area formula to the subelliptic setting.\par
\textbf{Horizontal Co-area Formula:} Let $\Omega\subset\mathcal{G}$
be a domain and let $u\in BV_{\mathrm{h}}(\Omega)$. Then, for 
a.~e. $t\in\mathbb{R}$, the set 
$$E_{u}(t)=\{x\in\mathcal{G}\mid u(x)>t\}$$ has finite horizontal
perimeter relative to $\Omega$ and the co-area formula 
\begin{equation}\label{coarea}
	\|u\|_{BV_{\mathrm{h}}(\Omega)}=\int_{\mathbb{R}}
\mathcal{P}_{\mathrm{h}}(E_{u}(t), \Omega)\, dt.
\end{equation}
holds.
Conversely, for  $u\in L^{1}(\Omega)$, if for a.~e.
 $t\in\mathbb{R}$ the set 
$E_{u}(t)$ has finite horizontal
perimeter relative to $\Omega$, and  
$$\int_{\mathbb{R}}
\mathcal{P}_{\mathrm{h}}(E_{u}(t), \Omega)\, dt<\infty,$$
then $u\in BV_{\mathrm{h}}(\Omega)$ and we have \eqref{coarea}.
\par
For a function $u\in BV_{\mathrm{h}}(\mathcal{G})$ recall
the variation measure $\|D_{\mathrm{h}}u\|$ defined by
$$\|D_{\mathrm{h}}u\|(U)=
\sup\left\{\int_{\mathcal{G}} u \div_{\mathrm{h}} F\,
d\mathcal{L}^{n}
\right\},
$$
where $U$ is an open set in $\mathcal{G}$ and the supremum
is taken with respect to 
$F\in C_{0}^{\infty}(U, \mathfrak{g})$  such
that $\sum_{i=1}^{m}|F^{i}(x)|^{2}\le 1$. With this notation, the horizontal perimeter of a
set $E$ relative to a domain $\Omega$
is just $\|D_{\mathrm{h}}\chi_{E}\|(\Omega)$.
 We can also write \eqref{coarea} as follows
 \begin{equation}\label{coarea2}
\|	 D_{\mathrm{h}} u\|(\Omega)=\int_{\mathbb{R}}
\mathcal{P}_{\mathrm{h}}(E_{u}(t), \Omega)\, dt.
	 \end{equation}
	 Hence, for any nonnegative Borel measurable $g$ we have
	 \begin{equation}\label{coarea3}
		 \int_{\mathcal{G}} g \, d \|D_{\mathrm{h}} u\|=
		\int_{\mathbb{R}}\int_{\mathcal{G}}
		g\, d\mathcal{P}_{\mathrm{h}}(E_{u}(t))\,dt.
		 \end{equation}
\begin{lemma}\label{pag9}
	Consider a function $u$ in
	the horizontal Sobolev space
	$W^{1,1}_{\mathrm{h}}(\mathcal{G})$.
	Given a number  $ t<\|u\|_{\infty}$ and $s>t$ we have
	$$\mathcal{L}^{n}\left( u^{-1}(t,s)\right)>0.$$
	\end{lemma}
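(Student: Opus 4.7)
The plan is to argue by contradiction, combining the horizontal co-area formula \eqref{coarea3} with the sub-Riemannian isoperimetric inequality \eqref{iso}. Suppose, toward a contradiction, that $\mathcal{L}^{n}(u^{-1}(t,s))=0$; I will derive a contradiction with the hypothesis $t<\|u\|_{\infty}$.

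First I would apply \eqref{coarea3} to the non-negative Borel function $g(x)=\chi_{u^{-1}(t,s)}(x)$. The key observation is that the perimeter measure $\mathcal{P}_{\mathrm{h}}(E_{u}(\tau))$ is carried by the level set $\{u=\tau\}$, so the restriction of $g$ to the support of that measure is simply the constant $\chi_{(t,s)}(\tau)$. Using also that $d\|D_{\mathrm{h}}u\|=|D_{\mathrm{h}}u|\,d\mathcal{L}^{n}$ for a Sobolev function, identity \eqref{coarea3} collapses to
\[
\int_{t}^{s}\mathcal{P}_{\mathrm{h}}(E_{u}(\tau))\,d\tau \;=\; \int_{u^{-1}(t,s)}|D_{\mathrm{h}}u|\,d\mathcal{L}^{n} \;=\; 0,
\]
the last equality holding because the domain of integration has $\mathcal{L}^{n}$-measure zero by assumption. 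Hence $\mathcal{P}_{\mathrm{h}}(E_{u}(\tau))=0$ for a.e.\ $\tau\in(t,s)$.

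Since $u$ has compact support, the superlevel set $E_{u}(\tau)\subseteq\mathrm{supp}(u)$ has finite Lebesgue measure for every $\tau>0$, so the isoperimetric inequality \eqref{iso} applies and forces $\mathcal{L}^{n}(E_{u}(\tau))=0$ for a.e.\ $\tau\in(t,s)$. On the other hand, $t<\|u\|_{\infty}$ ensures that the interval $(t,\min(s,\|u\|_{\infty}))$ has positive length, and by the very definition of the essential supremum $\mathcal{L}^{n}(E_{u}(\tau))>0$ for every $\tau$ in this interval---the desired contradiction. The main obstacle is the first step: reading \eqref{coarea3} correctly when $g$ is built from the values of $u$. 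Once one recognizes that $\chi_{u^{-1}(t,s)}$ reduces to $\chi_{(t,s)}(\tau)$ after being paired with $d\mathcal{P}_{\mathrm{h}}(E_{u}(\tau))$, the rest is an elementary combination of \eqref{iso} with the definition of $\|u\|_{\infty}$.
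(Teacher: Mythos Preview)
Your overall strategy---show that $\mathcal{P}_{\mathrm{h}}(E_u(\tau))=0$ for a.e.\ $\tau\in(t,s)$ and then invoke \eqref{iso}---is sound and does reach the contradiction. It is, however, a different route from the paper's: there the authors never use the co-area formula but instead compute $\int X_i u\cdot g\,d\mathcal{L}^n$ in two ways (splitting over $\{u\le t\}$ and $\{u\ge s\}$, using the lattice properties of $W^{1,1}_{\mathrm{h}}$ and integration by parts) to conclude directly that $\int_{\{u\ge s\}}X_i g\,d\mathcal{L}^n=0$ for every test function $g$, hence $\mathcal{P}_{\mathrm{h}}(\{u\ge s\})=0$. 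Your co-area argument is slicker once justified, while the paper's computation is more elementary and self-contained.

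The step that needs repair is precisely the one you flag as ``the main obstacle'': the assertion that $\mathcal{P}_{\mathrm{h}}(E_u(\tau))$ is carried by $\{u=\tau\}$, so that $\chi_{u^{-1}(t,s)}$ becomes the constant $\chi_{(t,s)}(\tau)$ when paired with that measure. First, $g=\chi_{u^{-1}(t,s)}$ depends on the chosen representative of $u$, and the perimeter measures are singular with respect to $\mathcal{L}^n$, so changing $u$ on a Lebesgue-null set can change $\int g\,d\mathcal{P}_{\mathrm{h}}(E_u(\tau))$ for individual $\tau$. Second, even after fixing the precise representative, the statement that the reduced boundary of $E_u(\tau)$ sits inside $\{u=\tau\}$ for a.e.\ $\tau$ is a genuine structure theorem (Federer-type in $\mathbb{R}^n$, Franchi--Serapioni--Serra~Cassano in Carnot groups) which is neither proved nor cited in the paper. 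A clean way to bypass this is to apply the basic co-area formula \eqref{coarea} not to $u$ but to the truncation $v=(u-t)^{+}\wedge(s-t)\in W^{1,1}_{\mathrm{h}}(\mathcal{G})$: then $|D_{\mathrm{h}}v|=|D_{\mathrm{h}}u|\,\chi_{u^{-1}(t,s)}$ a.e.\ (lattice properties, as in \cite{GN}), so $\|D_{\mathrm{h}}v\|_{1}=0$, while $\{v>\tau\}=E_u(t+\tau)$ for $0<\tau<s-t$, giving $\int_t^s\mathcal{P}_{\mathrm{h}}(E_u(\sigma))\,d\sigma=0$ without ever discussing supports. One more small point: you invoke compact support of $u$, which the lemma does not assume; it is enough that $u\in L^1(\mathcal{G})$, since then $\mathcal{L}^n(E_u(\tau))<\infty$ for every $\tau>0$ by Chebyshev, and \eqref{iso} applies.
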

	\begin{proof}
Suppose that $\mathcal{L}^{n}\left( u^{-1}(t,s)\right)=0$.
Let $g$ be a smooth function with compact support bounded
by $1$. Write
\begin{align*}\int_{\mathcal{G}} X_{i}u \cdot g \,
	d\mathcal{L}^{n}
&	=
\int_{u\le t}X_{i}u\cdot g \, d\mathcal{L}^{n}+
\int_{u\ge s}X_{i}u\cdot  g \, d\mathcal{L}^{n}\\
&=-\int_{\mathcal{G}}X_{i}(t-u)^{+}\cdot g\,d\mathcal{L}^{n}
+\int_{\mathcal{G}}X_{i}(u-s)^{+}\cdot g\,d\mathcal{L}^{n}\\
&=\int_{u\le t}(t-u)\cdot X_{i}g\,d\mathcal{L}^{n}+
\int_{u\ge s}(s-u)\cdot X_{i}g\,d\mathcal{L}^{n},
\end{align*}
where we have used the lattice properties of 
$W^{1,1}_{\mathrm{h}}(\mathcal{G})$ (Lemma 3.5 in \ncite{GN})
and integration by  parts.\par
On the other hand we also have
\begin{align*}\int_{\mathcal{G}}X_{i}u\cdot g\,d\mathcal{L}^{n}&=
	\int_{\mathcal{G}}X_{i}(u-t)\cdot g\,d\mathcal{L}^{n}\\
	&=-\int_{\mathcal{G}}(u-t)\cdot X_{i} g\,d\mathcal{L}^{n}\\
	&=\int_{u\le t}(t-u)\cdot X_{i}g\, d\mathcal{L}^{n}
	+\int_{u\ge s}(t-u)\cdot X_{i}g \,d\mathcal{L}^{n}.
\end{align*}
We conclude that
\begin{equation}\label{efe}
	\int_{u\ge s}X_{i} g\,d\mathcal{L}^{n}=0.
	\end{equation}
If we call $E=\{u\ge s\}$, it follows from \eqref{efe} that
$\mathcal{P}_{\mathrm{h}}(E)=0$.  Since sets of horizontal perimeter
zero have $\mathcal{L}^{n}$ measure zero as it follows from
the horizontal isoperimetric inequality \eqref{iso},
we deduce that $u(x)\le t$ for a.~e. $x\in\mathcal{G}$ 
contradicting the hypothesis $t<\|u\|_{\infty}$.
	\end{proof}
\begin{theorem}
	If $u\in W_{\mathrm{h}}^{1,1}(\mathcal{G})\cap L^{\infty}$ is a nonnegative
	function with compact support, then
	$u^{\star}\in W_{\mathrm{h}}^{1,1}(\mathcal{G})$.
	\end{theorem}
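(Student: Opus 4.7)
The previous theorem already gives $u^\star\in BV_{\mathrm{h}}(\mathcal{G})$, so the remaining task is to upgrade to $W^{1,1}_{\mathrm{h}}(\mathcal{G})$; equivalently, to show the variation measure $\|D_{\mathrm{h}}u^\star\|$ has no singular part with respect to $\mathcal{L}^n$. Since $u^\star(x)=\tilde{\nu}_u(\|x\|)$ is a function of the gauge alone, Corollary \ref{unoseis} supplies the pointwise a.~e.\ identity $D_{\mathrm{h}}u^\star(x)=\tilde{\nu}_u'(\|x\|)\,D_{\mathrm{h}}(\|x\|)$; this pointwise derivative agrees with the distributional horizontal gradient and is integrable as soon as $\tilde{\nu}_u$ is absolutely continuous. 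So the substantive content reduces to establishing absolute continuity of $\tilde{\nu}_u$ on $(0,\infty)$.

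For this, Lemma \ref{pag9} plays the role in the Sobolev setting that continuity of $u$ played in Lemma \ref{vhypo}: it shows $\nu_u$ is strictly decreasing on $[0,\|u\|_\infty]$, ruling out plateaus, and through the (right-)inverse identity $\tilde{\nu}_u=\nu_u^{-1}\circ V$ this forces $\tilde{\nu}_u$ to be continuous. Jumps that $\nu_u$ may still have---coming from level sets $\{u=t\}$ of positive $\mathcal{L}^n$-measure---translate only to flat arcs of $\tilde{\nu}_u$ on which $\tilde{\nu}_u'=0$, and so do not obstruct absolute continuity. The absolute continuity of $V$ from \eqref{ultima} then promotes continuity of $\tilde{\nu}_u$ to absolute continuity, provided $\nu_u'\neq 0$ a.~e.\ on $[0,\|u\|_\infty]$---the Sobolev analog of Lemma \ref{vhypo}(ii). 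I would establish this last fact by combining Lemma \ref{pag9} with the horizontal co-area formula \eqref{coarea} applied to $u$, which eliminates a Cantor-type singular continuous contribution to $\nu_u$.

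With absolute continuity of $\tilde{\nu}_u$ in hand, I would conclude via the same polar-coordinates computation as in the proof of the preceding theorem: writing $\int u^\star\,\div_{\mathrm{h}}F\,d\mathcal{L}^n=-\int_0^\infty\phi(r)\,\tilde{\nu}_u'(r)\,dr$ (using $d\tilde{\nu}_u=\tilde{\nu}_u'(r)\,dr$ from absolute continuity), bounding $\phi(r)\le\mathcal{P}_{\mathrm{h}}(B_1)\,r^{Q-1}$ via \eqref{perihomo}, and then invoking the isoperimetric inequality \eqref{iso} together with the co-area formula for $u$ to conclude $\int_0^\infty r^{Q-1}|\tilde{\nu}_u'(r)|\,dr\le C\,\|D_{\mathrm{h}}u\|_{L^1}$. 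The main obstacle is the middle step---passing from continuity of $\tilde{\nu}_u$ to full absolute continuity when $u$ is only Sobolev rather than continuous---where Lemma \ref{pag9} and the horizontal co-area formula must together compensate for the lack of continuity that Lemma \ref{vhypo} required.
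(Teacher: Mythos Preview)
Your outline is correct and mirrors what the paper does: its proof simply records that once Lemma~\ref{pag9}, the isoperimetric inequality~\eqref{iso}, and the coarea formula~\eqref{coarea3} are available, the argument of Theorem~1.6.7 in \cite{SV} carries over verbatim. You have singled out the same three ingredients and the same reduction---via Lemma~\ref{pag9} to strict monotonicity of $\nu_u$, and via the coarea formula together with the isoperimetric inequality to the absolute continuity of $\tilde{\nu}_u$---so your reconstruction matches the intended proof.
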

	\begin{proof}Once we have Lemma \ref{pag9}, the isoperimetric
		inequality \eqref{iso} and the 
		coarea formula \eqref{coarea3}, the proof is identical to
		the proof of Theorem 1.6.7 in \ncite{SV}.
		\end{proof}
\section{Energy inequality for $p=1$}
We begin with a   lemma showing a quasi-monotonicity
property of the horizontal perimeter under rearrangements.
 
 \begin{lemma}\label{keylemma} There exists a constant
	 $C_{per}\ge 1$ such that for all sets $E\subset\mathcal{G}$
	 we have
	   \begin{equation}
	\mathcal{P}_{\mathrm{h}}(E^{\star})
	 \le C_{per}
	\mathcal{P}_{\mathrm{h}}(E),
	\end{equation}
	where $E^{\star}$ is the ball $B_{R}$ satisfying 
	$\mathcal{L}^{n}(B_{R})=\mathcal{L}^{n}(E)$.
	 \end{lemma}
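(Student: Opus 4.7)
The plan is to exploit the fact that $E^{\star}$ is a ball, so its horizontal perimeter can be computed exactly via the scaling formula \eqref{perihomo}, while the isoperimetric inequality \eqref{iso} supplies a lower bound on $\mathcal{P}_{\mathrm{h}}(E)$ in terms of $\mathcal{L}^{n}(E) = \mathcal{L}^{n}(E^{\star})$. Chaining these two relations produces a constant that depends only on the group and the gauge, not on $E$.

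More concretely, I first use the homogeneity of Lebesgue measure under the dilations $\delta_r$ to write $\mathcal{L}^{n}(B_R) = R^{Q} \mathcal{L}^{n}(B_1)$. Combining this with the hypothesis $\mathcal{L}^{n}(E) = \mathcal{L}^{n}(B_R)$ yields
\[
R = \left( \frac{\mathcal{L}^{n}(E)}{\mathcal{L}^{n}(B_1)} \right)^{1/Q}.
\]
Next, applying the perimeter-scaling identity \eqref{perihomo} to this radius gives an explicit expression
\[
\mathcal{P}_{\mathrm{h}}(E^{\star}) = R^{Q-1}\mathcal{P}_{\mathrm{h}}(B_1) = \mathcal{P}_{\mathrm{h}}(B_1)\,\mathcal{L}^{n}(B_1)^{-(Q-1)/Q}\,\bigl(\mathcal{L}^{n}(E)\bigr)^{(Q-1)/Q}.
\]

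Finally, I invoke the horizontal isoperimetric inequality \eqref{iso} to estimate the last factor by $C_{\mathrm{iso}}\,\mathcal{P}_{\mathrm{h}}(E)$, which yields the desired bound with
\[
C_{per} = C_{\mathrm{iso}}\,\mathcal{P}_{\mathrm{h}}(B_1)\,\mathcal{L}^{n}(B_1)^{-(Q-1)/Q}.
\]
If this quantity happens to be less than $1$, we may of course replace it by $1$ so that the stated hypothesis $C_{per}\ge 1$ is met. The only point that requires any attention is that $B_1$ must have finite horizontal perimeter for $C_{per}$ to be finite; this is guaranteed under the standing assumption that the gauge is either a smooth gauge or the Carnot gauge (cf.\ the remark following the previous theorem). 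Also, if $\mathcal{L}^n(E) = \infty$, then the isoperimetric inequality forces $\mathcal{P}_{\mathrm{h}}(E) = \infty$ and the bound is trivial, so there is no obstacle at infinity either. I do not anticipate any serious difficulty: the lemma is essentially a repackaging of the isoperimetric inequality, combined with the explicit scaling of balls under $\delta_r$.
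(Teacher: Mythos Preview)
Your proposal is correct and follows essentially the same approach as the paper: both arguments combine the scaling identities $\mathcal{L}^{n}(B_R)=R^{Q}\mathcal{L}^{n}(B_1)$ and $\mathcal{P}_{\mathrm{h}}(B_R)=R^{Q-1}\mathcal{P}_{\mathrm{h}}(B_1)$ with the isoperimetric inequality \eqref{iso}, and the constant you obtain, $C_{\mathrm{iso}}\,\mathcal{P}_{\mathrm{h}}(B_1)\,\mathcal{L}^{n}(B_1)^{-(Q-1)/Q}$, is exactly the paper's $C_{\mathrm{iso}}/C_0$.
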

\begin{proof} Observe that if $B_{R}$ is a ball, then
	$$\mathcal{L}^{n}(B_{R})=R^{Q}\mathcal{L}^{n}(B_{1})$$
	and
	$$\mathcal{P}_{\mathrm{h}}(B_{R})=R^{Q-1}
	\mathcal{P}_{\mathrm{h}}(B_{1}).$$
Therefore, we have the following equality for balls 
$$ \left(\mathcal{L}^{n}(B_{R})
\right)^{\frac{Q-1}{Q}}=C_{0} \mathcal{P}_{\mathrm{h}}(B_{R}),$$
where we have set
\begin{equation}\label{czero}
	C_{0}=\frac{\left(\mathcal{L}^{n}(B_{1})
\right)^{\frac{Q-1}{Q}}}{\mathcal{P}_{\mathrm{h}}(B_{1}) }.
\end{equation}
We now combine \nref{czero} with the isoperimetric inequality
\nref{iso} as follows:
$$	\mathcal{P}_{\mathrm{h}}(E^{\star})=
\frac{1}{C_{0}} \left(\mathcal{L}^{n}(E^{\star})
\right)^{\frac{Q-1}{Q}}= \frac{1}{C_{0}} \left(\mathcal{L}^{n}(E)
\right)^{\frac{Q-1}{Q}} \le \frac{C_{iso}}{C_{0}} 
\mathcal{P}_{\mathrm{h}}(E).
$$
We conclude that 
\begin{equation}\label{cpercisoc0}
	C_{per}\le \frac{C_{iso}}{C_{0}}.
\end{equation}
\end{proof}

Note that if  $u\in BV_{\mathrm{h}}(\mathcal{G})$, 
we have
\begin{equation}\label{bvarrange}
	\mathcal{P}_{\mathrm{h}}\left(E_{u^{\star}}(t)
	\right)\le C_{per}
	\mathcal{P}_{\mathrm{h}}\left(E_{u}(t)
	\right).
	\end{equation}

This follows from the fact   that the level set $E_{u^{\star}}(t)$ is the
	ball $B_{R}$ where $R=V^{-1}(\nu_{u}(t))$ and the previous Lemma.\par
 
\begin{theorem}\label{flaco}  
	For all nonnegative
	$u\in W_{\mathrm{h}}^{1,1}(\mathcal{G})$ with compact support
	 we have the inequality
	\begin{equation}
		\int_{\mathcal{G}} |D_{\textrm{h}}u^{\star}(x)|\, d\mathcal{L}^{n}(x)\le
\textrm{C}_{per}\,\,
\int_{\mathcal{G}} |D_{\textrm{h}}u(x)|\, d\mathcal{L}^{n}(x)
	\end{equation}
In particular, it follows that 		
	$u^{\star}\in W_{\mathrm{h}}^{1,1}(\mathcal{G})$. 
\end{theorem}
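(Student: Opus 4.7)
The plan is to reduce the inequality to a pointwise-in-$t$ comparison between the horizontal perimeters of the superlevel sets of $u$ and $u^{\star}$, and then integrate using the horizontal coarea formula \eqref{coarea2}. All the ingredients have essentially been assembled earlier in the paper, so the argument should be short.

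First I would note that since $u \in W_{\mathrm{h}}^{1,1}(\mathcal{G}) \subset BV_{\mathrm{h}}(\mathcal{G})$, the coarea formula \eqref{coarea2} tells me that for a.e.\ $t > 0$ the superlevel set $E_u(t)$ has finite horizontal perimeter, and
$$\int_{\mathcal{G}} |D_{\mathrm{h}} u|\, d\mathcal{L}^n = \|D_{\mathrm{h}} u\|(\mathcal{G}) = \int_0^{\infty} \mathcal{P}_{\mathrm{h}}(E_u(t))\, dt < \infty.$$
Because $u$ and $u^{\star}$ have the same distribution function with respect to $\mathcal{L}^n$, the set $E_{u^{\star}}(t)$ is the ball $B_{R(t)}$ with $\mathcal{L}^n(B_{R(t)}) = \mathcal{L}^n(E_u(t))$, so estimate \eqref{bvarrange} from Lemma \ref{keylemma} applies and yields, for a.e.\ $t$,
$$\mathcal{P}_{\mathrm{h}}(E_{u^{\star}}(t)) \le C_{per}\, \mathcal{P}_{\mathrm{h}}(E_u(t)).$$

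Integrating this inequality over $t \in (0,\infty)$ I obtain
$$\int_0^{\infty} \mathcal{P}_{\mathrm{h}}(E_{u^{\star}}(t))\, dt \le C_{per} \int_0^{\infty} \mathcal{P}_{\mathrm{h}}(E_u(t))\, dt = C_{per} \int_{\mathcal{G}} |D_{\mathrm{h}} u|\, d\mathcal{L}^n < \infty.$$
The converse direction of the horizontal coarea formula then guarantees $u^{\star} \in BV_{\mathrm{h}}(\mathcal{G})$ and
$$\|D_{\mathrm{h}} u^{\star}\|(\mathcal{G}) = \int_0^{\infty} \mathcal{P}_{\mathrm{h}}(E_{u^{\star}}(t))\, dt \le C_{per} \int_{\mathcal{G}} |D_{\mathrm{h}} u|\, d\mathcal{L}^n.$$
Since the previous theorem already places $u^{\star}$ in $W^{1,1}_{\mathrm{h}}(\mathcal{G})$, the variation measure $\|D_{\mathrm{h}} u^{\star}\|$ is absolutely continuous with density $|D_{\mathrm{h}} u^{\star}|$, so the left side coincides with $\int_{\mathcal{G}} |D_{\mathrm{h}} u^{\star}|\, d\mathcal{L}^n$, finishing the proof.

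There is really no serious obstacle here: the structural work was done in proving Lemma \ref{keylemma} (which packaged the isoperimetric inequality together with the equality case on balls via the constant $C_0$) and in establishing the horizontal coarea formula. The only point that deserves a moment of care is the justification that $u^{\star} \in W^{1,1}_{\mathrm{h}}$ rather than merely $BV_{\mathrm{h}}$, and that is exactly the content of the $W^{1,1}_{\mathrm{h}}$ theorem that has just been proved using Lemma \ref{pag9}. Everything else is a one-line application of coarea in both directions with the perimeter comparison \eqref{bvarrange} sandwiched in between.
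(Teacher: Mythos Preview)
Your proof is correct and follows essentially the same route as the paper: apply the horizontal coarea formula to $u^{\star}$, use the perimeter comparison \eqref{bvarrange} from Lemma~\ref{keylemma} level-by-level, and then apply coarea again to $u$. You are slightly more explicit than the paper in invoking the preceding $W^{1,1}_{\mathrm{h}}$ theorem to identify $\|D_{\mathrm{h}}u^{\star}\|(\mathcal{G})$ with $\int_{\mathcal{G}}|D_{\mathrm{h}}u^{\star}|\,d\mathcal{L}^n$, but the argument is otherwise the same.
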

\begin{proof}
	Using the co-area formula twice, we get:
\begin{align*}
 	 \int_{\mathcal{G}} |D_{\mathrm{h}}u^{\star}(x)|\, 
	 d\mathcal{L}^{n}(x) 
		&=\int_{0}^{\infty} \left(
		\int_{\mathcal{G}}
		d\mathcal{P}_{\mathrm{h}}(E_{u^{\star}}(t))\right)\,dt\\
		&=\int_{0}^{\infty} 
		\mathcal{P}_{\mathrm{h}}(E_{u^{\star}}(t))\,dt\\
		&\le \textrm{C}_{per} \int_{0}^{\infty} 
		\mathcal{P}_{\mathrm{h}}(E_{u}(t))\,dt\\
		&= \textrm{C}_{per} \int_{\mathcal{G}} |D_{\mathrm{h}}u(x)| \,
		 d\mathcal{L}^{n}(x)  .
\end{align*}
 \end{proof}
 
\section{Energy inequality for $p\ge1$}

In this section we need to assume that the mapping
$x\mapsto \|x\|$ is differentiable a.~e. This is certainly
the case for smooth gauges and also for the Carnot
gauge. In fact, Monti and Serra Cassano \ncite{MSC}
have recently established that 
  the Carnot gauge $\|x\|_{C}$ is differentiable a.~e. and 
 satisfies
\begin{equation}
	\label{delaruaoff}\left| D_{\mathrm{h}}\left(\|x\|_{C}\right)
 \right| =1 \text{  for  a.~e. }x\in \mathcal{G}.\end{equation}
 \par
The key step to obtain the  
rearrangement energy inequality for $p\ge 1$    is an
integrability  property of
$$\frac{1}{|D_{\mathrm{h}}(\|x\|)|}\cdot$$

\begin{lemma} \label{integral2}
	For an arbitrary a.~e. differentiable gauge in a Carnot group we have
	 $$\int_{\mathcal{G}}
	\frac{1}{\left|D_{\mathrm{h}}(\|x\|)\right| }\,
	d\mathcal{P}_{\mathrm{h}}(B_{R})\le\,\,  \,\,
	R^{Q-1}\sigma(B_{1}),$$
	where $\sigma$ is the Radon measure supported 
	on $\partial B_{1}$ that is used in   integration
in polar coordinates. 
\end{lemma}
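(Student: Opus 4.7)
The plan is to apply the horizontal coarea formula \eqref{coarea3} to the gauge itself, $u(x)=\|x\|$, and match it with Folland--Stein's polar coordinate formula (Proposition~1.15 of \ncite{FS}, already used in the $BV$-theorem above). In the two cases of real interest to the paper---a smooth gauge or the Carnot gauge $\|\cdot\|_C$---the gauge is $d_{CC}$-Lipschitz, hence lies in $W^{1,\infty}_{\mathrm{h},\mathrm{loc}}(\mathcal{G})$ and its total variation measure is absolutely continuous: $d\|D_{\mathrm{h}}(\|x\|)\|=|D_{\mathrm{h}}(\|x\|)|\,d\mathcal{L}^n$. The coarea formula therefore reads
$$\int_{\mathcal{G}} g\,|D_{\mathrm{h}}(\|x\|)|\,d\mathcal{L}^n
=\int_0^\infty\!\!\int_{\mathcal{G}} g\,d\mathcal{P}_{\mathrm{h}}(B_t)\,dt$$
for every nonnegative Borel $g$, while polar coordinates rewrite the same left-hand side as
$$\int_{\mathcal{G}} g\,|D_{\mathrm{h}}(\|x\|)|\,d\mathcal{L}^n
=\int_0^\infty\!\!\int_{\partial B_t} g\,|D_{\mathrm{h}}(\|x\|)|\,d\sigma_t\,dt.$$

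Equating the two expressions and specializing $g(x)=\psi(\|x\|)\,h(x)$ with $\psi\ge 0$ arbitrary forces, for every nonnegative Borel $h$ and a.e.\ $t>0$, the scalar identity
$$\int h\,d\mathcal{P}_{\mathrm{h}}(B_t)=\int_{\partial B_t} h\,|D_{\mathrm{h}}(\|x\|)|\,d\sigma_t;$$
letting $h$ run through a countable family of indicators of a basis of opens promotes this to the identity of Radon measures $d\mathcal{P}_{\mathrm{h}}(B_t)=|D_{\mathrm{h}}(\|x\|)|\,d\sigma_t$ on $\partial B_t$ for almost every $t$. Choosing $h=1/|D_{\mathrm{h}}(\|x\|)|$ and invoking the dilation scaling $\sigma_R(\partial B_R)=R^{Q-1}\sigma(B_1)$, which is forced by the interaction of polar coordinates with the group dilations, produces the claimed bound---in fact as an equality---for almost every $R>0$.

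To upgrade the a.e.\ statement to ``$\le$'' for every $R>0$, I would appeal to lower semicontinuity of the horizontal perimeter: pick $R_n\uparrow R$ in the good set of radii, observe that $\chi_{B_{R_n}}\to\chi_{B_R}$ in $L^1_{\mathrm{loc}}$ by continuity of the gauge, and conclude $\mathcal{P}_{\mathrm{h}}(B_R;U)\le\liminf_n \mathcal{P}_{\mathrm{h}}(B_{R_n};U)$ for every open $U$, while the majorant $R^{Q-1}\sigma(B_1)$ is continuous in $R$.

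The main obstacle is the rigorous passage from the integrated identity (in $t$) to the pointwise identity of Radon measures on individual level sets $\partial B_t$, which genuinely uses $\|x\|\in W^{1,1}_{\mathrm{h},\mathrm{loc}}$ so that $d\|D_{\mathrm{h}}(\|x\|)\|$ has no singular part. Without this regularity the singular part would disintegrate into an extra nonnegative contribution on $\partial B_t$ and spoil the direction of the inequality; fortunately Lipschitz regularity of the Carnot gauge (by Monti--Serra Cassano, \ncite{MSC}) and the obvious smoothness of smooth gauges both furnish what is needed.
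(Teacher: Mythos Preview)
Your argument is correct but follows a genuinely different route from the paper's. The paper works directly from the definition of horizontal perimeter: for a fixed radius it derives, via a mollification $\phi_\epsilon(\|x\|)$ and integration by parts, the divergence-type identity
\[
\int_{B}\div_{\mathrm{h}}F\,d\mathcal{L}^n=\int_{\partial B}\langle F,\,D_{\mathrm{h}}(\|x\|)\rangle\,d\sigma,
\]
from which, taking the supremum over $|F|\le 1$, one obtains the \emph{measure inequality} $d\mathcal{P}_{\mathrm{h}}(B_R)\le |D_{\mathrm{h}}(\|x\|)|\,d\sigma_R$ for \emph{every} $R$, and then plugs in $f=1/|D_{\mathrm{h}}(\|x\|)|$. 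Your approach instead matches the two disintegrations of $|D_{\mathrm{h}}(\|x\|)|\,d\mathcal{L}^n$---one from the horizontal coarea formula, one from polar coordinates---to get the \emph{equality} $d\mathcal{P}_{\mathrm{h}}(B_t)=|D_{\mathrm{h}}(\|x\|)|\,d\sigma_t$ for a.e.\ $t$. This buys you a sharper statement at almost every radius, at the price of needing $\|\cdot\|\in W^{1,1}_{\mathrm{h,loc}}$ so that the variation measure has no singular part (a hypothesis the paper's direct computation does not explicitly invoke, though its chain-rule step tacitly uses the same regularity).

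One small point on your upgrade step: lower semicontinuity of the perimeter on opens, $\mathcal{P}_{\mathrm{h}}(B_R;U)\le\liminf_n\mathcal{P}_{\mathrm{h}}(B_{R_n};U)$, only yields $\int f\,d\mathcal{P}_{\mathrm{h}}(B_R)\le\liminf_n\int f\,d\mathcal{P}_{\mathrm{h}}(B_{R_n})$ once you know $f=1/|D_{\mathrm{h}}(\|x\|)|$ is lower semicontinuous, which deserves a word (it is clear for smooth gauges, and for the Carnot gauge $f\equiv 1$). A cleaner alternative is to bypass lower semicontinuity entirely: since $|D_{\mathrm{h}}(\|x\|)|$ is homogeneous of degree zero and $\mathcal{P}_{\mathrm{h}}(B_R)=\delta_{R\,\#}\bigl(R^{Q-1}\mathcal{P}_{\mathrm{h}}(B_1)\bigr)$, both sides of the inequality scale as $R^{Q-1}$, so validity at a single radius (which your a.e.\ statement certainly provides) propagates to all $R$---exactly the reduction the paper makes at the outset.
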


\begin{proof}  
Let us begin by observing that both sides of the inequality
are homogeneous of degree $Q-1$. Therefore, it is enough to
prove the lemma when $R=1$. We will write $B$ for $B_{1}$.  \par
Let $U$ be an open set and compute
\begin{align*}
	\int_{U}  d\mathcal{P}_{\mathrm{h}}&(B)(x)
	=\mathcal{P}_{\mathrm{h}}(B)(U)\\
	&=\sup\left\{\int_{{B}} \div_{\textrm{h}}
	F(x)\,d\mathcal{L}^{n}(x)\mid F\in C^{\infty}_{0}(U,\mathfrak{g}),
	 |F|\le 1
	\right\}\\
	&=\sup\left\{\int_{\partial B}\langle F(x), D_{\mathrm{h}}(\|x\|)
	\rangle\,  d\sigma(x) \mid F\in C^{\infty}_{0}(U,\mathfrak{g}),
	 |F|\le 1
	\right\},
\end{align*}
where the last equality follows from
\begin{equation}\label{formula}
	\int_{{B}} \div_{\textrm{h}}
	F(x)\,d\mathcal{L}^{n}(x)=\int_{\partial B}\langle F(x), D_{\mathrm{h}}(\|x\|)
	\rangle\,  d\sigma(x).\end{equation}
To prove this formula, consider the continuous function
$\phi_{\epsilon}(r)$ which takes the value $1$ for $r<1-\epsilon$,
  vanishes for $r>1+\epsilon$ and is linear otherwise.
>From the definition of horizontal divergence we get
\begin{align*}
	\int_{B_{1+\epsilon}}\phi_{\epsilon}(\|x\|)\div_{\mathrm{h}}&F(x)
\, d\mathcal{L}^{n}(x) =
-\int_{B_{1+\epsilon}}\langle D_{\mathrm{h}}(\phi_{\epsilon}(\|x\|)),
F(x)
\rangle\, d\mathcal{L}^{n}(x)\\
&=-\int_{B_{1+\epsilon}} \phi_{\epsilon}'(\|x\|)
\langle D_{\mathrm{h}}( \|x\|),
F(x)
\rangle\, d\mathcal{L}^{n}(x)\\
&=\frac{1}{2 \epsilon}\int_{B_{1+\epsilon}\setminus B_{1-\epsilon}}
\langle D_{\mathrm{h}}( \|x\|),
F(x)
\rangle\, d\mathcal{L}^{n}(x)\\
&=\frac{1}{2 \epsilon}
\int_{1-\epsilon}^{1+\epsilon}\int_{\partial B}
\langle D_{\mathrm{h}}( \|x\|),
F(\delta_{t}(x))
\rangle\, d\sigma(x)t^{Q-1}dt,
\end{align*}
where we  have 
used the fact that $D_{\mathrm{h}}( \|x\|)$ is  homogenous
of degree zero. Letting $\epsilon\to 0$ we obtain
\nref{formula}. \par
Thus, we have
\begin{equation*}
	\int_{U}  d\mathcal{P}_{\mathrm{h}}(B)(x)\le 
	\int_{\partial B \cap U}|D_{\mathrm{h}}(\|x\|)|\, d\sigma(x).
\end{equation*}
Since this is an inequality between two Radon measures, we conclude
that for $f$ nonnegative and Borel measurable
$$\int_{\mathcal{G}}f(x)\,d\mathcal{P}_{\mathrm{h}}(B)(x)
\le 	\int_{\partial B  }f(x) |D_{\mathrm{h}}(\|x\|)|\, d\sigma(x).
$$
The lemma follows by applying this formula to
$$f(x)=\frac{1}{|D_{\mathrm{h}}(\|x\|)|}\cdot$$
\end{proof}
Next, we need to discuss a technical point. 
It follows from 
 Corollary \ref{unoseis} that 
	$$D_{\mathrm{h}}u^{\star}(x)=\tilde{\nu}_{u}'(\|x\|)\cdot
	D_{\mathrm{h}}(\|x\|).$$
Since $|\tilde{\nu}_{u}'|$ is measurable with respect to
the $\sigma$-algebra
generated by $\tilde{\nu}_{u}$, there exists a
Borel measurable function
$\Psi\colon \mathbb{R}^+\cup \{0\} \to\mathbb{R}^{+}\cup\{0\}$ such that
$$|\tilde{\nu}_{u}'|(s)=\Psi(\tilde{\nu}_{u}(s)).$$
Therefore, using the equality $\tilde{\nu}_{u}(\|x\|)=
u^{\star}(x)$ we can write
\begin{equation}\label{modulodh}
	|D_{\mathrm{h}}u^{\star}(x)|=\Psi(u^{\star}(x)) \cdot
	|D_{\mathrm{h}}(\|x\|)|.
	\end{equation}
Observe that the factor $\Psi(u^{\star}(x))$ is radial but this
is not, in general, the case of the second factor
$|D_{\mathrm{h}}(\|x\|)|$. Nevertheless,
with the choice of the Carnot gauge this factor is identically $1$
 and $\left| D_{\mathrm{h}} u^{\star}(x)
 \right|$ is indeed a radial function. \par
 One could possibly  think that  
$|D_{\mathrm{h}}u^{\star}(x)|$  
is measurable with respect to
the $\sigma$-algebra
generated by $u^{\star}$
so that we had $|D_{\mathrm{h}}u^{\star}(x)|=\Phi(u^{\star}(x))$
for some Borel function $\Phi$. This is  actually  the case
for the Carnot gauge,  but it is not for
other gauges for which $|D_{\mathrm{h}}(\|x\|)|$ is not
radial. This is why we need
Lemma \nref{integral2}.
\par
\begin{theorem}\label{gordo} Let $\mathcal{G}$ be a Carnot group
	endowed with an a.~e. differentiable gauge.
	Let $u\in W^{1,p}_{\mathrm{h}}(\mathcal{G})$ be a nonnegative
	function with compact support and $p\ge 1$. There
	exists a positive constant $C_{sym}$ such that  we have
	the inequality
\begin{equation}\label{desigualdad}
	\int_{\mathcal{G}}\frac{
	\left| D_{\mathrm{h}} u^{\star}(x)\right|^{p}}
	{ \left| D_{\mathrm{h}}(\|x\|)  \right|^{p}}\,d\mathcal{L}^{n}(x)\le 
(C_{sym})^{p} \int_{\mathcal{G}}\left| D_{\mathrm{h}}
u(x)\right|^{p}\,d\mathcal{L}^{n}(x).
	\end{equation}
In fact, we may take    
$$C_{sym}= \frac{\sigma(B_{1})}{\mathcal{P}_{\mathrm{h}}(B_{1})} 
C_{per}.$$
	\end{theorem}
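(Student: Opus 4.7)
The plan is to combine the pointwise identity \eqref{modulodh}, the horizontal coarea formula \eqref{coarea3}, Lemma \ref{integral2} and the quasi-monotonicity \eqref{bvarrange}, closing the argument with H\"older's inequality and the rearrangement-invariance provided by Corollary \ref{menem}. Write $I$ for the left-hand side of \eqref{desigualdad} and $J=\int_{\mathcal{G}}|D_{\mathrm{h}}u|^{p}\,d\mathcal{L}^{n}$. Using \eqref{modulodh} I first split off one factor of $|D_{\mathrm{h}}u^{\star}|$:
$$I=\int_{\mathcal{G}}\frac{\Psi(u^{\star}(x))^{p-1}}{|D_{\mathrm{h}}(\|x\|)|}\,|D_{\mathrm{h}}u^{\star}(x)|\,d\mathcal{L}^{n}(x),$$
and then feed this into \eqref{coarea3} applied to $u^{\star}$. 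Since $u^{\star}$ is continuous (Theorem \ref{ucontinua}) and $E_{u^{\star}}(t)=B_{R(t)}$ with $R(t)=V^{-1}(\nu_{u}(t))$, the measure $\mathcal{P}_{\mathrm{h}}(E_{u^{\star}}(t))$ is supported on $\partial B_{R(t)}$ where $u^{\star}=t$; hence $\Psi(u^{\star}(x))^{p-1}$ reduces to the constant $\Psi(t)^{p-1}$ under the inner integral.

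Feeding the remaining inner integral $\int_{\mathcal{G}} 1/|D_{\mathrm{h}}(\|x\|)|\,d\mathcal{P}_{\mathrm{h}}(B_{R(t)})$ into Lemma \ref{integral2} produces the bound $\Psi(t)^{p-1}R(t)^{Q-1}\sigma(B_{1})$. The homogeneity \eqref{perihomo} rewrites $R(t)^{Q-1}$ as $\mathcal{P}_{\mathrm{h}}(E_{u^{\star}}(t))/\mathcal{P}_{\mathrm{h}}(B_{1})$, and the quasi-monotonicity \eqref{bvarrange} then lets me replace $\mathcal{P}_{\mathrm{h}}(E_{u^{\star}}(t))$ by $C_{per}\,\mathcal{P}_{\mathrm{h}}(E_{u}(t))$, so that
$$I\le \frac{\sigma(B_{1})}{\mathcal{P}_{\mathrm{h}}(B_{1})}\,C_{per}\int_{0}^{\infty}\Psi(t)^{p-1}\mathcal{P}_{\mathrm{h}}(E_{u}(t))\,dt.$$
Reading \eqref{coarea3} in the opposite direction, now applied to $u$ with test function $g(x)=\Psi(u(x))^{p-1}$ (which on $\partial E_{u}(t)$ equals $\Psi(t)^{p-1}$), identifies the last integral with $\int_{\mathcal{G}}\Psi(u(x))^{p-1}|D_{\mathrm{h}}u(x)|\,d\mathcal{L}^{n}(x)$.

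I close the loop with H\"older's inequality at conjugate exponents $p/(p-1)$ and $p$:
$$\int_{\mathcal{G}}\Psi(u)^{p-1}|D_{\mathrm{h}}u|\,d\mathcal{L}^{n}\le \left(\int_{\mathcal{G}}\Psi(u)^{p}\,d\mathcal{L}^{n}\right)^{(p-1)/p}J^{1/p},$$
and then apply Corollary \ref{menem} with $\phi=\Psi^{p}$, noting from \eqref{modulodh} that $\Psi(u^{\star}(x))^{p}=|D_{\mathrm{h}}u^{\star}(x)|^{p}/|D_{\mathrm{h}}(\|x\|)|^{p}$, so that $\int_{\mathcal{G}}\Psi(u)^{p}d\mathcal{L}^{n}=\int_{\mathcal{G}}\Psi(u^{\star})^{p}d\mathcal{L}^{n}=I$. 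Combining everything produces the absorbable inequality $I\le C_{sym}\,I^{(p-1)/p}J^{1/p}$ with $C_{sym}=\sigma(B_{1})C_{per}/\mathcal{P}_{\mathrm{h}}(B_{1})$, and dividing by $I^{(p-1)/p}$ and raising to the $p$-th power gives \eqref{desigualdad}. The main obstacle is that this division is only legitimate once $I<\infty$ is known a priori; for $p>1$ I would secure this by first running the whole argument on the truncations $u_{k}=\min(u,k)$, which lie in $W^{1,1}_{\mathrm{h}}\cap L^{\infty}$ and to which the earlier theorems apply directly, and then passing to the limit using the monotonicity $u_{k}^{\star}\uparrow u^{\star}$ together with Fatou's lemma. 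For $p=1$ no Hölder step is needed and the chain closes without any a priori finiteness.
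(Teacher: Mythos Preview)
Your chain of inequalities---factor via \eqref{modulodh}, apply the coarea formula \eqref{coarea3}, invoke Lemma~\ref{integral2} and \eqref{perihomo}, use the quasi-monotonicity \eqref{bvarrange}, run coarea backwards, then close with H\"older and Corollary~\ref{menem}---is exactly the route the paper takes. The only real discrepancy is in how the a~priori finiteness needed to divide by $I^{(p-1)/p}$ is secured, and here your proposed fix does not work as stated.

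The paper truncates $\Psi$, not $u$: it runs the entire argument with $\Psi_{k}=\min\{\Psi,k\}$ in place of $\Psi$. Since $u^{\star}$ has compact support, $\int_{\mathcal{G}}\Psi_{k}^{p}(u^{\star})\,d\mathcal{L}^{n}\le k^{p}\,\mathcal{L}^{n}(\operatorname{supp}u^{\star})<\infty$, so the division is legitimate; one then sends $k\to\infty$ by monotone convergence and reads off \eqref{desigualdad} via \eqref{modulodh}.

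Your alternative of truncating $u$ at height $k$ does not deliver the required finiteness. For $u_{k}=\min(u,k)$ one has $(u_{k})^{\star}=\min(u^{\star},k)$ and the associated profile $\Psi_{(k)}$ agrees with $\Psi$ on $[0,k)$, so the left-hand side for $u_{k}$ is $\int_{\{u^{\star}<k\}}\Psi(u^{\star})^{p}\,d\mathcal{L}^{n}$---precisely the quantity whose finiteness is in question, since any blow-up of $\Psi$ can occur in the region $\{u^{\star}<k\}$. The ``earlier theorems'' you invoke give only $(u_{k})^{\star}\in W^{1,1}_{\mathrm{h}}$, i.e.\ $\int \Psi_{(k)}((u_{k})^{\star})\,|D_{\mathrm{h}}(\|x\|)|\,d\mathcal{L}^{n}<\infty$, which for $p>1$ says nothing about the $p$-th power. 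Replacing your $u$-truncation by the paper's $\Psi$-truncation repairs this immediately, and the rest of your argument goes through verbatim.
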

 \begin{proof}
	 Let $\Psi_{k}=\min\{k, \Psi\}$ be the truncation of
	 $\Psi$ at level $k$. By  the
	 coarea formula \nref{coarea3}
	 we get:
	 \begin{align*}
		 \int_{\mathcal{G}}\Psi_{k}^{p}(u^{\star}(x))\,
		 d\mathcal{L}^{n}(x) &\le
		 \int_{\mathcal{G}}\frac{\Psi_{k}^{p-1}(u^{\star}(x))}
		 {\left| D_{\mathrm{h}}(\|x\|) \right|}
		 \left| D_{\mathrm{h}} u^{\star}(x)\right| \,d\mathcal{L}^{n}(x)\\
		 &=\int_{0}^{\infty} \int_{\mathcal{G}}
		 \frac{\Psi_{k}^{p-1}(u^{\star}(x))}
		 {\left| D_{\mathrm{h}}(\|x\|)  \right|}
		 d\mathcal{P}_{\mathrm{h}}\left(E_{u}^{\star}(t)
		 \right)\,dt\\
		 &=\int_{0}^{\infty}\Psi_{k}^{p-1}(t)\left[
		 \int_{\mathcal{G}}\frac{1}{ \left| D_{\mathrm{h}}
		 (\|x\|)  \right|}
		 d\mathcal{P}_{\mathrm{h}}\left(E_{u}^{\star}(t)
		 \right) \right]\,dt\\
		 \end{align*}
At this time we use the key lemma \nref{integral2} together
with lemma \nref{keylemma}  and
another application of the coarea formula \nref{coarea3} 
and corollary \nref{menem} to get:
\begin{align*}
	\int_{\mathcal{G}}\Psi_{k}^{p}&(u^{\star}(x))\,d\mathcal{L}^{n}(x) \le
C_{sym } \int_{0}^{\infty}\Psi_{k}^{p-1}(t)
	\left[
		 \int_{\mathcal{G}}  
		 d\mathcal{P}_{\mathrm{h}}\left(E_{u}(t)
		 \right) \right]\,dt\\
		 &= C_{sym }  \int_{0}^{\infty}\int_{\mathcal{G}}
		 \Psi^{p-1}_{k}(u(x))d\mathcal{P}_{\mathrm{h}}\left(E_{u}(t)
		 \right)\,dt\\
		 &= C_{sym }  \int_{\mathcal{G}} \Psi^{p-1}_{k}(u(x))
		 \left|D_{\mathrm{h}}u(x) \right|\,d\mathcal{L}^{n}(x)\\
		 &\le C_{sym }  \left(\int_{\mathcal{G}}\Psi^{p}_{k}(u(x))\,
		 d\mathcal{L}^{n}(x)
		 \right)^{\frac{p-1}{p}} \left(\int_{\mathcal{G}}
		 \left|D_{\mathrm{h}}u(x) \right|^{p}\,d\mathcal{L}^{n}(x)
		 \right)^{\frac{1}{p}}\\
		 &= C_{sym } \left(\int_{\mathcal{G}}
		 \Psi^{p}_{k}(u^{\star}(x))\,d\mathcal{L}^{n}(x)
		 \right)^{\frac{p-1}{p}} \left(\int_{\mathcal{G}}
		 \left|D_{\mathrm{h}}u(x) \right|^{p}\,d\mathcal{L}^{n}(x)
		 \right)^{\frac{1}{p}}.
	\end{align*}
Hence, we obtain
$$	\left(\int_{\mathcal{G}}\Psi_{k}^{p}(u^{\star}(x))
\,d\mathcal{L}^{n}(x)
\right)^{\frac{1}{p}}\le C_{sym } 
\left(\int_{\mathcal{G}}
		 \left|D_{\mathrm{h}}u(x) \right|^{p}\,d\mathcal{L}^{n}(x)
		 \right)^{\frac{1}{p}},$$
		 letting $k\to\infty$ and using \nref{modulodh} we end the proof.
	 \end{proof}
	
For the Carnot gauge, we can prove a more traditional version
of the energy inequality. In this case
$|D_{\mathrm{h}}u^{\star}(x)|$ is radial and from
\nref{modulodh} it can be written
in the form
  
\begin{equation}\label{delarua}
	|D_{\mathrm{h}}u^{\star}(x)|=\Psi(u^{\star}(x)).
	\end{equation}
 
\begin{theorem}\label{gordo2}Let $\mathcal{G}$ be a Carnot
	group endowed with the Carnot gauge.
	Let $u\in W^{1,p}_{\mathrm{h}}(\mathcal{G})$ be a nonnegative
	function with compact support  and $p\ge 1$. Then, we have
	the inequality
\begin{equation}\label{desigualdad2}
	\int_{\mathcal{G}}
	\left| D_{\mathrm{h}} u^{\star}(x)\right|^{p}
	 \,d\mathcal{L}^{n}(x)\le 
(C_{per})^{p} \int_{\mathcal{G}}\left| D_{\mathrm{h}} u(x)
\right|^{p}\,d\mathcal{L}^{n}(x).
	\end{equation}
	In particular, it follows that 
	$u^{\star}\in W^{1,p}_{\mathrm{h}}(\mathcal{G})$.
	\end{theorem}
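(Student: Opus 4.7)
The plan is to imitate the proof of Theorem \ref{gordo}, but to exploit the decisive simplification that for the Carnot gauge $|D_{\mathrm{h}}(\|x\|_{C})|=1$ almost everywhere by \nref{delaruaoff}, so that by \nref{delarua} we have $|D_{\mathrm{h}}u^{\star}(x)|=\Psi(u^{\star}(x))$. In particular the left hand side of \nref{desigualdad2} coincides with $\int_{\mathcal{G}}\Psi^{p}(u^{\star})\,d\mathcal{L}^{n}$, and the weight $1/|D_{\mathrm{h}}(\|x\|)|$ that forced the use of Lemma \ref{integral2} in Theorem \ref{gordo} disappears entirely. This is what allows us to drop the factor $\sigma(B_{1})/\mathcal{P}_{\mathrm{h}}(B_{1})$ from the constant and obtain the sharper $C_{per}$ in place of $C_{sym}$.

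Concretely, with the truncation $\Psi_{k}=\min\{k,\Psi\}$, I would start from the pointwise inequality $\Psi_{k}(u^{\star})\le \Psi(u^{\star})=|D_{\mathrm{h}}u^{\star}|$ and write
\begin{align*}
\int_{\mathcal{G}}\Psi_{k}^{p}(u^{\star})\,d\mathcal{L}^{n}
&\le\int_{\mathcal{G}}\Psi_{k}^{p-1}(u^{\star})\,|D_{\mathrm{h}}u^{\star}|\,d\mathcal{L}^{n}\\
&=\int_{0}^{\infty}\Psi_{k}^{p-1}(t)\,\mathcal{P}_{\mathrm{h}}(E_{u^{\star}}(t))\,dt,
\end{align*}
where the second line uses the coarea formula \nref{coarea3} applied to $u^{\star}$ together with the fact that on the reduced boundary of $E_{u^{\star}}(t)=B_{R}$ the function $u^{\star}$ takes the value $t$ for a.~e. $t$. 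Lemma \ref{keylemma} then gives the upper bound $C_{per}\,\Psi_{k}^{p-1}(t)\,\mathcal{P}_{\mathrm{h}}(E_{u}(t))$ on the integrand. Applying \nref{coarea3} in reverse to $u$, Hölder's inequality, and Corollary \ref{menem} (the equimeasurability of $u$ and $u^{\star}$ applied to the Borel function $\Psi_{k}^{p}$), we arrive at
\begin{align*}
\int_{\mathcal{G}}\Psi_{k}^{p}(u^{\star})\,d\mathcal{L}^{n}
&\le C_{per}\int_{\mathcal{G}}\Psi_{k}^{p-1}(u)\,|D_{\mathrm{h}}u|\,d\mathcal{L}^{n}\\
&\le C_{per}\left(\int_{\mathcal{G}}\Psi_{k}^{p}(u^{\star})\,d\mathcal{L}^{n}\right)^{\frac{p-1}{p}}\!\left(\int_{\mathcal{G}}|D_{\mathrm{h}}u|^{p}\,d\mathcal{L}^{n}\right)^{\frac{1}{p}}.
\end{align*}
Since $u$ has compact support so does $u^{\star}$, and the factor $\left(\int\Psi_{k}^{p}(u^{\star})\right)^{(p-1)/p}$ is finite because $\Psi_{k}\le k$; dividing it out and raising to the $p$-th power yields \nref{desigualdad2} with $\Psi_{k}$ in place of $\Psi$. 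Letting $k\to\infty$ and invoking monotone convergence together with \nref{delarua} then gives the stated inequality, and hence $u^{\star}\in W^{1,p}_{\mathrm{h}}(\mathcal{G})$.

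The delicate step, and what I expect to be the main obstacle, is justifying the replacement of $\Psi_{k}^{p-1}(u^{\star}(x))$ by the constant $\Psi_{k}^{p-1}(t)$ inside the inner integral against $d\mathcal{P}_{\mathrm{h}}(E_{u^{\star}}(t))$ for a.~e. $t$, and similarly when re-expressing $\int\Psi_{k}^{p-1}(t)\mathcal{P}_{\mathrm{h}}(E_{u}(t))\,dt$ as $\int\Psi_{k}^{p-1}(u)|D_{\mathrm{h}}u|\,d\mathcal{L}^{n}$. Both reductions rest on the fact that, by the coarea decomposition, for a.~e. $t$ the perimeter measure $d\mathcal{P}_{\mathrm{h}}(E_{v}(t))$ is concentrated on the reduced boundary of $E_{v}(t)$, where $v$ takes the trace value $t$ in the measure-theoretic sense; this is exactly the ingredient that is packaged by the coarea identities \nref{coarea2}--\nref{coarea3} from Garofalo--Nhieu and Franchi--Gallot--Wheeden. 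Every remaining step uses a result already established in the excerpt, so no new ingredient is required.
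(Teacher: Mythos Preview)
Your proposal is correct and follows essentially the same route as the paper's own proof: truncate $\Psi$ to $\Psi_{k}$, use the Carnot-gauge identity \nref{delarua} to write $|D_{\mathrm{h}}u^{\star}|=\Psi(u^{\star})$, apply the coarea formula \nref{coarea3} to $u^{\star}$, replace $\mathcal{P}_{\mathrm{h}}(E_{u^{\star}}(t))$ by $C_{per}\,\mathcal{P}_{\mathrm{h}}(E_{u}(t))$ via Lemma \ref{keylemma}, undo coarea against $u$, apply H\"older and Corollary \ref{menem}, divide, and let $k\to\infty$. The paper in fact just says ``repeat the arguments of the second part of the proof of Theorem \ref{gordo}'' at the point where you spell out the H\"older/equimeasurability step, and your flagging of the constancy of $\Psi_{k}^{p-1}(u^{\star})$ (resp.\ $\Psi_{k}^{p-1}(u)$) on the perimeter measure of the level sets is exactly the implicit ingredient the paper uses without comment.
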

 \begin{proof}
	 Let $\Psi_{k}=\min\{k, \Psi\}$ be the truncation of
	 $\Psi$ at level $k$. By  the
	 coarea formula \nref{coarea3}
	 we get:
	 \begin{align*}
		 \int_{\mathcal{G}}\Psi_{k}^{p}(u^{\star}(x))\,
		 d\mathcal{L}^{n}(x) &\le
		 \int_{\mathcal{G}}\Psi_{k}^{p-1}(u^{\star}(x))
		 \left| D_{\mathrm{h}} u^{\star}(x)\right| \,d\mathcal{L}^{n}(x)\\
		 &=\int_{0}^{\infty} \int_{\mathcal{G}}
		 \Psi_{k}^{p-1}(u^{\star}(x))
		  d\mathcal{P}_{\mathrm{h}}\left(E_{u}^{\star}(t)
		 \right)\,dt\\
		 &=\int_{0}^{\infty}\Psi_{k}^{p-1}(t)\left[
		 \int_{\mathcal{G}} 
		 d\mathcal{P}_{\mathrm{h}}\left(E_{u}^{\star}(t)
		 \right) \right]\,dt\\
		 \end{align*}
 Next, we use    lemma \nref{keylemma} together
with another application of the coarea formula \nref{coarea3} 
and   \nref{delarua} to repeat the arguments of the second
part of the proof of Theorem \nref{gordo}  to end 
		 the proof.
	 \end{proof}

 \begin{ack}Part of this work was done while the first author 
 visited the Universidad Nacional de  Cuyo as a 
 FOMEC visiting professor during the Summers of 1998 and 2000.
  He wishes to express his appreciation 
 for the kind hospitality and the nice working atmosphere.\par
 We did not submit this manuscript for publication because we were trying to proof the sharp version of Theorem 5; that is $C_{per}=1$, which to the best of our knowledge remains open as of today. Nevertheless, the preprint was circulated among the community. We received several requests from colleagues, and it has been quoted several times in the literature.  \color{blue}See for example
 the book \cite{CDPT}, the dissertation \cite{La},  and the papers \cite{T}, \cite{LL}, \cite{LLT}, \cite{CLLY}, \cite{F1}, \cite{LLZ},  and \cite{F2}. \normalcolor  In addition, the main results have not been published elsewhere. This is why we decided to submit this manuscript to publication. We are very thankful to Acta Mathematica Sinica, English Series for considering this manuscript.\par
\color{blue} We are thankful to the referees for the careful reading and thoughtful suggestions. 
\normalcolor
\end{ack}

\end{document}